\def\SH{\mbox{\fontencoding{OT2}\selectfont\char88}}
\def\Z{{\mathbb Z}}
\def\Sel{{\rm Sel}}
\def\GL{{\rm GL}}
\def\rk{{\rm rk}}
\def\an{{\rm an}}
\def\Gal{{\rm Gal}}
\def\equi{{\rm equi}}
\def\spl{{\rm split}}
\def\ns{{\rm nonsplit}}
\def\g{{\rm good}}
\def\F{{\mathbb F}}
\def\Q{{\mathbb Q}}
\def\Z{{\mathbb Z}}
\def\F{{\mathbb F}}
\def\Q{{\mathbb Q}}
\def\Qp{{\Q_p}}
\def\Zp{{\Z_p}}
\def\bQ{{\bar\Q}}
\def\grL{{\mathfrak L}}
\def\ord{{\mathrm{ord}}}
\newtheorem{theorem}{Theorem}
\newtheorem{corollary}[theorem]{Corollary}
\newtheorem{lemma}[theorem]{Lemma}
\newtheorem{remark}[theorem]{Remark}
\newenvironment{proof}{\noindent {\bf Proof:}}{$\Box$ \vspace{2 ex}}
\title{A majority of elliptic curves over $\Q$ satisfy the \\Birch and Swinnerton-Dyer conjecture}
\author{Manjul Bhargava, Christopher Skinner, and Wei Zhang}
\begin{document}
\maketitle
\begin{abstract}
We prove that a majority (in fact, $>66\%$) of all elliptic curves over $\Q$, when ordered by height, satisfy the Birch and Swinnerton-Dyer rank conjecture. 
\end{abstract}

\tableofcontents

\section{Introduction}

Any elliptic curve $E$ over $\Q$ is isomorphic to a unique curve of
the form $E_{A,B}:y^2=x^3+Ax+B$, where $A,B \in \Z$ and for all primes
$p$:\, $p^6 \nmid B$ whenever $p^4 \mid A$. The (naive) {\it height}
$H(E_{A,B})$ of the elliptic curve $E=E_{A,B}$ 
is then defined by
$$H(E_{A,B}):= \max\{4|A^3|,27B^2\}.$$
The purpose of this article is to prove the following theorem:

\begin{theorem}\label{bsdcor}
A majority of elliptic curves over $\Q$, when ordered by height, satisfy the Birch and Swinnerton-Dyer rank conjecture. 
\end{theorem}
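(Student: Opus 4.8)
\noindent The plan is to combine two largely independent bodies of technique: the arithmetic statistics of Selmer groups of elliptic curves ordered by height (Bhargava--Shankar), which control the \emph{algebraic} rank $\rk E(\Q)$; and the ``$p$-converse theorems'' of $L$-function theory and Iwasawa theory (building on Gross--Zagier, Kolyvagin, Kato, Skinner--Urban, Skinner, and W.~Zhang), which convert information about $\rk E(\Q)$ into information about the \emph{analytic} rank $\ord_{s=1}L(E,s)$. Since Gross--Zagier--Kolyvagin already yield the BSD rank conjecture --- and the finiteness of $\SH(E)$ --- for every $E/\Q$ with $\ord_{s=1}L(E,s)\le1$, it suffices to exhibit a set of elliptic curves of density $>\tfrac12$ (and, with more care, $>66\%$) on which one can prove, from the algebraic side, that $\ord_{s=1}L(E,s)=\rk E(\Q)\in\{0,1\}$.

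\smallskip
\noindent \emph{The rank $0$ curves.} If $\rk E(\Q)=0$ then the BSD rank conjecture asserts precisely that $L(E,1)\ne0$, and this is the conclusion of the $p$-converse theorem in analytic rank $0$: via the Iwasawa main conjecture for $\GL_2$ (Skinner--Urban, together with Kato's divisibility and the control theorem), if $\rk E(\Q)=0$, if $\SH(E)[p^\infty]$ is finite, and if $E$ satisfies suitable local hypotheses at some odd prime $p$ --- good ordinary or multiplicative reduction at $p$, $\bar\rho_{E,p}$ irreducible and suitably ramified, a restriction on the conductor --- then $L(E,1)\ne0$. To feed this, I would use the equidistribution refinement of the Bhargava--Shankar average-size results (for $p=3$ or $p=5$, say): a positive proportion of $E$ have $\Sel_p(E)=0$ for a prescribed small odd prime $p$, which forces $\rk E(\Q)=0$ and $\SH(E)[p^\infty]=0$. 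For every such $E$ meeting the local hypotheses the $p$-converse theorem gives $L(E,1)\ne0$, hence the BSD rank conjecture (with $\SH(E)$ finite, by Kato).

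\smallskip
\noindent \emph{The rank $1$ curves.} If $\rk E(\Q)=1$ then BSD rank asks for $\ord_{s=1}L(E,s)=1$, which is the conclusion of the $p$-converse theorem in analytic rank $1$ (W.~Zhang, via Kolyvagin's conjecture and the indivisibility of Heegner points; and Skinner, via Iwasawa theory): under analogous local hypotheses at an odd prime $p$, if $\rk E(\Q)=1$ and $\SH(E)[p^\infty]$ is finite then $\ord_{s=1}L(E,s)=1$ (and then Gross--Zagier--Kolyvagin re-confirm $\rk E(\Q)=1$ and give $\SH(E)$ finite). To feed this, I would combine Bhargava--Shankar (the average size of the $5$-Selmer group) with the equidistribution of the root numbers $w(E)=\pm1$ in this family: a positive proportion of $E$ have $\dim_{\F_p}\Sel_p(E)=1$ \emph{and} $w(E)=-1$, whereupon the $p$-parity theorem --- together with the evenness of the $\Z_p$-corank of $\SH(E)[p^\infty]$ --- forces $\rk E(\Q)=1$ and $\SH(E)[p^\infty]=0$. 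The $p$-converse theorem then yields BSD rank for every such $E$ satisfying the local hypotheses.

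\smallskip
\noindent Adding the two densities gives a set of elliptic curves of density $>\tfrac12$ on which the BSD rank conjecture holds, which proves the theorem; pushing the statistics --- using all of the $2$-, $3$-, $4$-, and $5$-Selmer average-size theorems together with root-number equidistribution, and optimizing over the choice of $p$ and over the cases $\Sel_p(E)=0$ vs.\ $\dim_{\F_p}\Sel_p(E)=1$ and $w(E)=+1$ vs.\ $w(E)=-1$ --- pushes the proportion past $66\%$. I expect the main obstacle to be the interface between the two halves: the local hypotheses demanded by the $p$-converse theorems (reduction type and residual image at $p$, ramification, the restriction on the conductor) must be re-expressed as congruence conditions at finitely many primes together with archimedean conditions, and one must then verify that the Bhargava--Shankar counting results --- and their equidistribution refinements --- are \emph{uniform} over such positive-density congruence families, so that ``positive proportion,'' and the quantitative bounds needed for the $66\%$, genuinely survive intersecting with all of these conditions at once. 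A secondary difficulty is numerical: beating $2/3$ requires exploiting all of the Selmer-average computations simultaneously rather than any one of them.
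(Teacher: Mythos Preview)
Your overall architecture---Selmer statistics feeding $p$-converse theorems---matches the paper's, and your identification of the ingredients (Bhargava--Shankar averages, Dokchitser--Dokchitser parity, Skinner--Urban and Zhang converse theorems) is correct. But there is a genuine gap in the combinatorics that prevents your outline from reaching a majority.

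The gap is your implicit assumption that root numbers are equidistributed across the full family of elliptic curves. This is not known: the paper only establishes (its Theorem~\ref{equithm}) that inside any large family defined by congruences modulo powers of primes $\equiv 1\pmod 4$, there is a subfamily of density $>55\%$ in which root numbers are equidistributed. On that $55\%$ subfamily your separate rank-$0$ and rank-$1$ arguments work and give, for $p=5$, lower densities of $3/8$ and $19/40$ respectively; but multiplied by $.5501$ and by the density $\approx .79$ of the family where the converse hypotheses hold, the sum is only about $37\%$ (this is exactly the paper's Corollaries~\ref{rank0cor} and~\ref{rank1cor}). On the remaining $\approx 45\%$ where nothing is known about root-number distribution, your separate bounds say nothing: from the average $5$-Selmer size $=6$ alone one cannot extract a nontrivial lower bound on the proportion with Selmer rank exactly $0$, nor exactly $1$.

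The idea you are missing is to bound the proportion with $5$-Selmer rank $\le 1$ \emph{jointly}, rather than rank $0$ and rank $1$ separately. From the single inequality $x\cdot 1 + (1-x)\cdot 5^2 \le 6$ one gets $x\ge 19/24$ with no parity input whatsoever; with equidistributed root numbers one sharpens this to $7/8$ (the paper's Theorem~\ref{rank0or1}). Because the rank-$0$ and rank-$1$ converse theorems can be applied simultaneously on the common family $S_0(5)\cap S_1(5)$ of density $\approx .79$, one gets roughly $\bigl(\tfrac{7}{8}\cdot .55 + \tfrac{19}{24}\cdot .45\bigr)\cdot .79 \approx .66$. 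This joint bound on the non-equidistributed part is what carries the argument past $50\%$; ``adding the two densities'' does not.

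A secondary point: invoking the $2$-, $3$-, and $4$-Selmer averages does not help here. The rank-$1$ converse theorem (Theorem~\ref{crit1-thm1}) requires $p\ge 5$, so $p=5$ is forced; the paper uses only the $5$-Selmer average to reach $66.48\%$.
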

As a consequence of our methods, we also obtain:

\begin{theorem}\label{shacor}
A majority of elliptic curves over $\Q$, when ordered by height, have finite Tate--Shafarevich group. 
\end{theorem}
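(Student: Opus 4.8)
The plan is to show that Theorem~\ref{shacor} is not really a separate statement: every elliptic curve for which the argument behind Theorem~\ref{bsdcor} verifies the Birch and Swinnerton-Dyer rank conjecture is in fact a curve whose Tate--Shafarevich group can be shown to be finite, so that the two theorems are proved at one stroke. The link is the classical one. By the Gross--Zagier formula together with Kolyvagin's theorem on the Euler system of Heegner points (and the modularity of elliptic curves over $\Q$), if $E/\Q$ satisfies $\ord_{s=1}L(E,s)\le 1$ then $\rk E(\Q)=\ord_{s=1}L(E,s)$ and, crucially, $\SH(E)$ is finite. Thus it is enough to produce a set of elliptic curves of density greater than $\tfrac12$, when ordered by height, each of analytic rank at most $1$ --- and this is exactly the set underlying Theorem~\ref{bsdcor}.

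The steps, mirroring the proof of Theorem~\ref{bsdcor}, would be these. First, using the theorems of Bhargava and Shankar on the average sizes of the $2$-, $3$-, and $5$-Selmer groups of elliptic curves ordered by height, one shows that a proportion greater than $66\%$ of all $E/\Q$ admit a prime $p\in\{2,3,5\}$ for which simultaneously $\dim_{\F_p}\Sel_p(E)\le 1$ and $E$ has a reduction type at $p$ to which a $p$-converse to Gross--Zagier--Kolyvagin is known to apply; after discarding a density-zero set one may moreover assume $\bar\rho_{E,p}$ is surjective (so $E$ has no rational $p$-torsion), whence $\mathrm{corank}_{\Z_p}\Sel_{p^\infty}(E)\le\dim_{\F_p}\Sel_p(E)\le 1$. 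Second, for such a curve the relevant $p$-converse theorem --- supplied, across the reduction types in play, by the work of Skinner, W.~Zhang, Skinner--Urban, Burungale--Tian and others --- gives
$$\ord_{s=1}L(E,s)\ =\ \mathrm{corank}_{\Z_p}\Sel_{p^\infty}(E)\ \le\ 1.$$
Third, Gross--Zagier--Kolyvagin applies to this curve and delivers both the rank equality of Theorem~\ref{bsdcor} and the finiteness of $\SH(E)$ of Theorem~\ref{shacor}.

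The one genuinely delicate point --- the same one that controls Theorem~\ref{bsdcor} --- is clearing the $50\%$ threshold in the first step. One prime at a time, the Bhargava--Shankar bounds already force $\dim_{\F_p}\Sel_p(E)\le 1$ for a large proportion of curves (for $p=5$, for example, the average $5$-Selmer size $6$ forces this for at least $19/24$ of them); what one risks losing are curves whose reduction type at the available prime falls outside the reach of the presently known converse theorems --- additive reduction at $p$, possibly supersingular reduction, failures of Heegner-type hypotheses at bad primes, and the various complications peculiar to $p=2$. The real work is therefore twofold: (a) to combine the Selmer information at the three primes $2,3,5$ --- that is, to control enough of the joint behaviour of $\dim_{\F_2}\Sel_2(E),\dim_{\F_3}\Sel_3(E),\dim_{\F_5}\Sel_5(E)$, or at least to assign each curve to a prime where a converse theorem is known to hold --- so that the curves that must be set aside form a set of density less than $\tfrac12$; and (b) to verify that the cited converse theorems really do cover every reduction type that occurs with positive density for $p\in\{2,3,5\}$. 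Once (a) and (b) are secured, the finiteness of $\SH$ follows with no extra effort, being already implied by Kolyvagin's theorem for curves of analytic rank at most $1$.
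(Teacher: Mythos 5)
Your central observation is exactly the one the paper uses: once one knows that more than half of all elliptic curves over $\Q$ (ordered by height) have $\ord_{s=1}L(E,s)\le 1$, the finiteness of $\SH(E)$ for those curves comes for free from Gross--Zagier (and S.~Zhang's extension to Shimura curves) plus Kolyvagin's Euler system. The paper says exactly this in the introduction: ``Since we show that the majority of elliptic curves over $\Q$ have analytic rank at most one, we then obtain Theorem~\ref{shacor} through the work of Kolyvagin \emph{et al.}'' So Theorem~\ref{shacor} is indeed obtained ``at one stroke'' with Theorem~\ref{bsdcor}, and no separate argument is needed. That part of your proposal is correct and matches the paper.

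Where your proposed route to the $>50\%$ threshold diverges from the paper is in the mechanics. You suggest combining Selmer data across $p\in\{2,3,5\}$ and assigning each curve to some prime where a ``$p$-converse'' applies. The paper does not do this --- it works with the single prime $p=5$ throughout the rank $0$ or $1$ argument. The reason is that the rank $1$ criterion (Theorem~\ref{crit1-thm1}, built on \cite{Z} and \cite{SZ}) currently needs $p\ge 5$, and the crucial combinatorial lemma (Theorem~\ref{rank0or1}) is an inequality involving the $5$-Selmer average only. Moreover, the $19/24$ bound you quote from the average Selmer size of $6$ is only the first half of the story: the paper boosts it to $7/8$ on a subfamily of density $\ge 55\%$ via the root-number equidistribution result (Theorem~\ref{equithm}) and the Dokchitser--Dokchitser parity theorem, and it is this refinement, together with the explicit local density computations for $S_0(5)$, $S_1'(5)$, $S_1(5)$ (all $>79\%$), that pushes the final count over $66\%$. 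You flag the correct risk (curves with reduction at $p$ outside the reach of the converse theorems), but the paper handles it by restricting to good ordinary or multiplicative reduction at $5$ --- that single congruence condition already accounts for about $80\%$ of curves and sidesteps the supersingular and additive cases entirely. Finally, ``Burungale--Tian'' is anachronistic here: the theorems actually invoked are those of Skinner--Urban, Skinner, Skinner--Zhang, and W.~Zhang, plus Gross--Zagier--Kolyvagin for the final finiteness step. In short: your identification of the link to Kolyvagin is right and is the whole content of the deduction of Theorem~\ref{shacor} from Theorem~\ref{bsdcor}, but your sketch of how the majority threshold is reached is a different and substantially more optimistic plan than the one the paper carries out.
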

In fact, we will prove that the words ``A majority'' in Theorems~\ref{bsdcor} and \ref{shacor} can be replaced with ``At least $66.48\%$''.  

More precisely, for any elliptic curve $E$ over $\Q$, let us denote by          
$\rk(E)$ the algebraic rank of~$E$, i.e., the rank of the Mordell--Weil         
group $E(\Q)$. Let $\rk_\an(E)$ denote the analytic rank of~$E$, i.e., 
the order of vanishing at the central critical point of the $L$-function of $E$
(an entire function by the modularity theorem 
\cite{W,TW,BCDT}). Let $\SH(E)$ denote the Tate--Shafarevich group of~$E$. Then we prove
\begin{eqnarray}\label{precisethm}
{\liminf_{X\to\infty}}\,
\frac
{\#\{E /\Q:
\rk(E)=\rk_\an(E),\mbox{ $\SH(E)$ is finite, and } H(E)<X \}}
{\#\{E /\Q: H(E)<X\}}&>&66.48\%,
\end{eqnarray}
although 
this percentage can likely be improved with a more careful analysis (see, e.g., the last paragraph of this introduction).

We also obtain new lower bounds for the proportion of elliptic curves having both algebraic and analytic rank zero, and for the proportion having both algebraic and analytic rank one:

\begin{theorem}\label{bsdrank0and1} 
When ordered by height, at least $16.50\%$ of elliptic curves over $\Q$ have algebraic and analytic rank zero, and at least $20.68\%$ of elliptic curves have algebraic and analytic rank one.
\end{theorem}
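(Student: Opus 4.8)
The plan is to push the machinery behind \eqref{precisethm} through a bookkeeping that isolates the curves whose common rank is forced to be $0$ and those whose common rank is forced to be $1$. The ingredients, all used as black boxes, are: (i) the Bhargava--Shankar theorems computing the averages over elliptic curves ordered by height of the sizes of the $n$-Selmer groups (with $\#\Sel_2$, $\#\Sel_3$, $\#\Sel_5$ having averages $3$, $4$, $6$), together with the refinements permitting a finite set of congruence conditions --- bearing in mind that a condition \emph{at} the prime $p$ genuinely alters the average of $\#\Sel_p$ through a local factor that must be recomputed; (ii) the equidistribution of the global root number, each of $w(E)=+1$ and $w(E)=-1$ having density $\tfrac{1}{2}$, a property that survives imposing good reduction at a fixed small prime; (iii) the $p$-parity theorem, which for density-$1$ curves makes $\dim_{\F_p}\Sel_p(E)$ even precisely when $w(E)=+1$; (iv) the $p$-converse theorems --- in rank $0$, $\Sel_{p^\infty}(E)=0\Rightarrow\ord_{s=1}L(E,s)=0$ (Skinner--Urban and Kato in the ordinary case, Wan in the supersingular case), and in rank $1$, $\mathrm{corank}_{\Z_p}\Sel_{p^\infty}(E)=1\Rightarrow\ord_{s=1}L(E,s)=1$ (W.~Zhang and Skinner, with their supersingular analogues), each valid for a prime $p\in\{3,5\}$ of good reduction at which $\bar\rho_{E,p}$ is surjective; and (v) Gross--Zagier--Kolyvagin (and, in the rank-$0$ case, Kolyvagin and Kato), so that $\ord_{s=1}L(E,s)\le 1$ forces $\rk(E)=\ord_{s=1}L(E,s)$ and the finiteness of $\SH(E)$. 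One practical remark: in the models $y^2=x^3+Ax+B$ being counted one has $v_3(c_4)\ge 1$, so --- outside a sparse non-minimal subfamily --- good reduction at $3$ is automatically supersingular; it is therefore the supersingular $p$-converse that one applies at $p=3$, while at $p=5$ both reduction types occur.

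\textbf{Rank $0$.} For $p\in\{3,5\}$, restrict to the positive-density family of curves with good reduction at $p$ and $\bar\rho_{E,p}$ surjective; the curves with non-surjective mod-$p$ image lie in a family of density $0$ and cost nothing. Over this family the Bhargava--Shankar average of $\#\Sel_p(E)$ is an explicit constant, and combining it with the equidistribution of $w(E)$ and the $p$-parity theorem yields an explicit positive lower bound for the proportion of curves with $\Sel_p(E)=0$. Such a curve has $\rk(E)=0$, and $\Sel_p(E)=0$ forces $\Sel_{p^\infty}(E)=0$ (a $p$-primary group with trivial $p$-torsion is trivial); the rank-$0$ $p$-converse then gives $\ord_{s=1}L(E,s)=0$, whence $\SH(E)$ is finite. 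Taking the union of these verified families over $p=3$ and $p=5$ and optimizing the underlying local computations produces the figure $16.50\%$.

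\textbf{Rank $1$.} Restrict further to $w(E)=-1$, so that $\ord_{s=1}L(E,s)$ is odd and hence $\ge 1$, and --- for $p\in\{3,5\}$ with good reduction and $\bar\rho_{E,p}$ surjective --- isolate the curves with $\dim_{\F_p}\Sel_p(E)=1$, again a positive explicit proportion by the Bhargava--Shankar average and $p$-parity. For such a curve, irreducibility of $\bar\rho_{E,p}$ gives $\dim_{\F_p}\Sel_p(E)=\mathrm{corank}_{\Z_p}\Sel_{p^\infty}(E)+d_p$, where $d_p$ is the $\F_p$-rank of the $p$-torsion of $\SH(E)$ modulo its maximal divisible subgroup; since the Cassels--Tate pairing on the latter is alternating and non-degenerate, $d_p$ is even, so $\mathrm{corank}_{\Z_p}\Sel_{p^\infty}(E)=1$. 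The rank-$1$ $p$-converse then gives $\ord_{s=1}L(E,s)=1$, and Gross--Zagier--Kolyvagin gives $\rk(E)=1$ and $\SH(E)$ finite. Taking the union over $p\in\{3,5\}$ and optimizing gives $20.68\%$.

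\textbf{Where the work lies.} The conceptual inputs --- the ordinary and supersingular $p$-converse theorems, Gross--Zagier--Kolyvagin, the $p$-parity theorem, and root-number equidistribution --- are quoted. What genuinely has to be done is, first, to evaluate the averages of $\#\Sel_p$ over the thin families cut out by the reduction condition at $p$ itself (and, at $p=5$, by the ordinary/supersingular split), since these differ from the global averages $4$ and $6$; and second, to optimize over $p\in\{3,5\}$, over the reduction types, and over the inclusion--exclusion among all the imposed conditions, so as to extract the explicit figures $16.50\%$ and $20.68\%$. I expect the local Selmer-average computations over these congruence families --- together with verifying that root-number equidistribution and the needed tail estimates survive the congruence conditions --- to be the main obstacle.
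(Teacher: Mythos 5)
Your plan diverges from the paper's proof in several substantive ways, and at least two of the divergences are genuine gaps. First, you propose working at both $p=3$ and $p=5$, and you correctly observe that in the short Weierstrass models being counted, good reduction at~$3$ is forced to be supersingular; you then invoke a supersingular $p$-converse theorem at $p=3$. But no such result is in the paper's toolkit: Remarks~\ref{rmk0} and~\ref{rmk1} describe the supersingular extensions explicitly as forthcoming or expected future work, and Theorem~\ref{crit1-thm1} requires $p\geq 5$ in any case, so $p=3$ cannot be used for the rank-one half at all. The paper in fact works \emph{only} at $p=5$. Second, you restrict to good reduction at~$p$, whereas the paper's families $S_0(5)$ and $S_1'(5)$ crucially include multiplicative reduction at~$5$ (Theorems~\ref{crit0-thm1} and~\ref{crit1-thm1} cover both the ordinary and multiplicative cases, via \cite{SU} and \cite{Smult} respectively); dropping the multiplicative curves shrinks $\mu(S_0(5))$ from roughly $4/5$ down to roughly $16/25$, so the lower bound $\tfrac{3}{8}\times .5501\times .64\approx .132$ falls short of $16.50\%$, and with $p=3$ unavailable there is no way to make up the deficit.

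There is also a conceptual misconception about the Selmer input. You assert that a congruence condition at~$p$ ``genuinely alters the average of $\#\Sel_p$ through a local factor that must be recomputed.'' The point of Theorem~\ref{thBS} is precisely that it does not: in \emph{any} large family---including one cut out by congruence conditions at~$p$---the average $p$-Selmer size is exactly $p+1$. The constants $3/8$ (proportion with $5$-Selmer rank~$0$) and $19/40$ (proportion with $5$-Selmer rank~$1$) are then extracted by a short pigeonhole argument (Theorems~\ref{rank0} and~\ref{rank1}) pairing the fixed average~$6$ with the fact that half the curves have $\#\Sel_5\geq 5$ by $p$-parity; there is no ``thin average'' to compute. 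Finally, equidistribution of the root number is not known to hold across all of $S_0(5)$ (or across all curves ordered by height); the paper instead invokes Theorem~\ref{equithm} to produce a subfamily $F'\subset S_0(5)$ of density $\kappa\geq .5501$ on which it holds, and the published bounds carry the explicit factor~$\kappa$: the rank-zero bound is $\tfrac{3}{8}\cdot\kappa\cdot\mu(S_0(5))$, not $\tfrac{3}{8}\cdot\mu(S_0(5))$. You should replace your $p=3$/good-reduction/surjective-image framework by the paper's $p=5$, ordinary-or-multiplicative, irreducible-image, $\kappa$-weighted bookkeeping.
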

As a consequence, we also obtain a new lower bound on the ($\liminf$ of the) average rank of all elliptic curves, when ordered by height:

\begin{corollary}\label{avecor}
The average $($algebraic or analytic$)$ rank of elliptic curves over $\Q$, when ordered by height, is at least $.2068$.
\end{corollary}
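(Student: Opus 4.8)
The plan is to deduce Corollary~\ref{avecor} directly from Theorem~\ref{bsdrank0and1}, using only the nonnegativity of the rank together with the lower bound on the density of curves of rank exactly one. Write $N(X):=\#\{E/\Q:H(E)<X\}$ and $N_1(X):=\#\{E/\Q:\rk(E)=\rk_\an(E)=1,\ H(E)<X\}$, so that Theorem~\ref{bsdrank0and1} asserts $\liminf_{X\to\infty} N_1(X)/N(X)\ge 0.2068$. First I would observe that every elliptic curve $E/\Q$ has $\rk(E)\ge 0$, and that every curve contributing to $N_1(X)$ has $\rk(E)=1$; summing these inequalities over all $E$ with $H(E)<X$ gives
$$\sum_{H(E)<X}\rk(E)\ \ge\ N_1(X).$$
Dividing by $N(X)$ and taking $\liminf$ as $X\to\infty$ then yields
$$\liminf_{X\to\infty}\ \frac{1}{N(X)}\sum_{H(E)<X}\rk(E)\ \ge\ \liminf_{X\to\infty}\frac{N_1(X)}{N(X)}\ \ge\ 0.2068,$$
which is the asserted bound for the average algebraic rank. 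The identical argument, with $\rk_\an(E)\ge 0$ in place of $\rk(E)\ge 0$ and $\rk_\an(E)=1$ on the curves counted by $N_1(X)$, gives the same lower bound for the average analytic rank.

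There is essentially no obstacle here: all of the work has already been done in Theorem~\ref{bsdrank0and1}, and Corollary~\ref{avecor} is a purely formal consequence. The only points requiring a moment's care are that we are bounding a $\liminf$ from below rather than computing an honest limiting average (so that no convergence or finiteness of the average rank needs to be established — indeed only a one-sided bound is claimed), and that the remaining $\approx 79.32\%$ of curves not covered by the rank-one density in Theorem~\ref{bsdrank0and1} could a priori all have rank zero, so that $0.2068$ is the best lower bound obtainable from these inputs alone. Any improvement would have to come from strengthening the density estimate in Theorem~\ref{bsdrank0and1} itself, not from the final deduction.
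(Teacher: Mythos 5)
Your argument is correct and is exactly the formal deduction the paper intends: the paper states Corollary~\ref{avecor} immediately after Theorem~\ref{bsdrank0and1} without a separate proof, since a $20.68\%$ lower density of curves of rank exactly one, combined with the nonnegativity of rank, trivially gives a $\liminf$ of at least $0.2068$ for the average rank. Your handling of the $\liminf$ and the parallel treatment of algebraic and analytic rank are both fine, and your closing remark that $0.2068$ is the best that can be extracted from Theorem~\ref{bsdrank0and1} alone is also accurate.
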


The observant reader will have noticed that the sum of the percentages in Theorem~\ref{bsdrank0and1} is less than the $66.48\%$ stated above for the majority in Theorems~\ref{bsdcor} and \ref{shacor}. This is because our method, described below, enables us to prove better lower bounds on the proportion of curves that have analytic rank 0 {\it or} 1 than for the sum of individual lower bounds on the proportions of curves having analytic rank 0 and  those having analytic rank 1!  

The proofs of Theorems~\ref{bsdcor},~\ref{shacor}, and~\ref{bsdrank0and1} make combined use of results 
from~\cite{BS3}, \cite{BS5}, \cite{DD}, \cite{Smult}, \cite{SU},  \cite{SZ}, and \cite{Z}.
The works \cite{Smult} and \cite{SU} allow one to deduce sufficient $p$-adic conditions for an elliptic curve to have algebraic and analytic rank zero, assuming $p\geq 3$.  
The works \cite{SZ} and \cite{Z} (which also rely on \cite{Smult} and \cite{SU}, as well as on 
the Gross--Zagier formula \cite{GZ}, its extension to Shimura curves by Shouwu Zhang \cite{SZ-GZ},
Kolyvagin's theory of Euler systems of Heegner points~\cite{Koly-Euler}, and    
the variant described by Bertolini--Darmon~\cite{BD-ACMC}) give sufficient 
$p$-adic conditions for an elliptic curve to have algebraic and analytic rank one, assuming $p\geq 5$. 
In particular, one can deduce certain sufficient conditions for an elliptic curve to have algebraic and analytic rank zero or one, in accordance with whether
its $p$-Selmer group has rank zero or one. We deduce two such theorems---namely, Theorems~\ref{crit0-thm1} and~\ref{crit1-thm1}---in~\S2.
On the other hand, the works \cite{BS3} and \cite{BS5} allow one to determine the average sizes of $p$-Selmer groups, for $p=3$ and $5$, respectively, in any 
{\it large} family of elliptic curves defined by congruence conditions (see \S\ref{selavg} for the definition of a large family).  

We combine these results as follows. Aside from requiring the $p$-Selmer groups to have rank zero or one, the conditions required of the elliptic curves in 
Theorems~\ref{crit0-thm1}~and~\ref{crit1-thm1} are either congruence conditions (e.g.,~having good ordinary or multiplicative reduction at $p$) or conditions that hold \pagebreak for $100\%$ of elliptic curves (e.g.,~$E[p]$ being an irreducible Galois representation).  In some cases, infinitely many congruence conditions are required, but nevertheless the families are large in the sense of \S\ref{selavg}.  We then estimate
the density of the elliptic curves in the large families defined by the congruence conditions in these theorems. In both cases, the density is shown to be close 
to~$80\%$. Combined with the results from \cite{BS3}~and~\cite{BS5} on average sizes of $p$-Selmer groups in large families, this is then already enough to deduce that  a sizable proportion of curves have algebraic and analytic rank zero or algebraic and analytic rank one. 

However, to deduce that there are many curves of analytic rank zero and many curves of analytic rank one, we need to know that there are a large number of curves with root number $+1$ and root number  $-1$. In fact, using the results of \cite{BS5}, we show that each of the large families of elliptic curves that we consider contains a large subfamily of density at least $55\%$ 
for which the root numbers are equidistributed, when the curves are ordered by height. Using this and the theorem of Dokchitser and Dokchitser~\cite{DD} on the correspondence between root numbers and $p$-Selmer parity, and then optimizing the combinatorics, enables us to deduce that many elliptic curves have algebraic and analytic rank zero, many have algebraic and analytic rank one, and most have algebraic and analytic rank zero {\it or} algebraic and analytic rank one,  yielding Theorems~\ref{bsdcor} and~\ref{bsdrank0and1}, and Corollary~\ref{avecor}. 
Since we show that the majority of elliptic curves over $\Q$ have analytic rank at most one, we then obtain Theorem~\ref{shacor} through the work of Kolyvagin {\it et al.}~proving the finiteness of the Tate-Shafarevich group in these cases.

In the earlier papers \cite{BS3} and \cite{rankone}, 
it was shown that a positive proportion of elliptic curves, when ordered by height, have 
algebraic and analytic rank zero, and a positive proportion have algebraic and analytic rank one, respectively.
In the rank one case, 
the proof relied on a different 
$p$-adic criterion, proved in \cite{S}, for an elliptic curve to have rank one, together with an equidistribution result on the local
images of Selmer elements. We expect that forthcoming work that combines some of the techniques of \cite{S} and \cite{Z} and extends 
the results to cases of supersingular reduction will substantially improve the percentages given in this paper (see~\S4).  It is also worth noting that if 
the Selmer average results of \cite{BS2}, \cite{BS3}, and~\cite{BS5} could be extended to infinitely many primes, then the words ``A majority'' in 
Theorems~\ref{bsdcor}~and~\ref{shacor} could be replaced with ``$100\%$'' (see~\S4, Theorem~\ref{allpavg}).

\section{Preliminaries}

In this section, we collect the results that we need as ingredients for the proofs of Theorems~\ref{bsdcor}--\ref{bsdrank0and1}.

\subsection{$p$-adic conditions for an elliptic curve to have algebraic and analytic rank 0}

We begin with the following criterion, deduced from results in \cite{SU} and \cite{Smult}, which gives a sufficient condition for
an elliptic curve over $\Q$ to have algebraic and analytic rank zero:

\begin{theorem}\label{crit0-thm1}
Let $p$ be an odd prime. Let $E$ be an elliptic curve over $\Q$ with conductor $N$ such that:
\begin{itemize}
\item[{\rm (a)}] $E$ has good ordinary or multiplicative reduction at $p$;
\item[{\rm (b)}] $E[p]$ is an irreducible $\Gal(\bQ/\Q)$-module;
\item[{\rm (c)}] there is at least one prime $\ell\neq p$ such that $\ell\mid\mid N$ and $E[p]$ is ramified at $\ell$;
\item[{\rm (d)}] the $p$-Selmer group $S_p(E)$ of $E$ is trivial.
\end{itemize}
Then the rank and analytic rank of $E$ are both equal to $0$.
\end{theorem}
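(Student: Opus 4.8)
The plan is to deduce the vanishing of the analytic rank from Iwasawa-theoretic input and the vanishing of the algebraic rank from the triviality of $S_p(E)$, and to check that the hypotheses (a)--(c) are exactly what is needed to invoke the relevant main conjecture results. First I would recall that, under hypothesis (d), the Mordell--Weil group $E(\Q)$ is finite (since $E(\Q)/pE(\Q)$ injects into $S_p(E)=0$), so $\rk(E)=0$ immediately; the content is therefore entirely in showing $\rk_{\an}(E)=0$, i.e.\ $L(E,1)\neq 0$. For that, the strategy is to pass through the $p$-adic $L$-function and the Iwasawa main conjecture: hypotheses (a), (b), and (c) are precisely the conditions under which the cyclotomic Iwasawa--Greenberg main conjecture for $E$ at $p$ is known (in the good ordinary case by \cite{SU} together with Kato's divisibility, and in the multiplicative case by the analogous results, using \cite{Smult} to handle the relevant compatibility with $p$-adic $L$-functions and to cover the additive/multiplicative subtleties at $p$). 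Concretely, (b) gives the residual irreducibility needed to even formulate and prove the main conjecture cleanly, and (c) supplies a prime $\ell \mid\mid N$ at which $E[p]$ is ramified, which is the ramification hypothesis required in \cite{SU} to pin down the sign/local conditions and to rule out the exceptional cases.

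The key chain of implications is then: $S_p(E)=0$ forces the $p$-primary Selmer group $\mathrm{Sel}_{p^\infty}(E/\Q)$ to be trivial (again because $S_p(E)$ surjects onto $\mathrm{Sel}_{p^\infty}(E/\Q)[p]$, so if the former is $0$ the latter has no $p$-torsion, hence is $0$ as it is $p$-divisible of finite corank); triviality of $\mathrm{Sel}_{p^\infty}(E/\Q)$ means the characteristic ideal of the Selmer group over the cyclotomic $\Z_p$-extension, evaluated appropriately, is a $p$-adic unit at the trivial character; by the main conjecture this forces the $p$-adic $L$-function $L_p(E)$ to be a unit at the trivial character, i.e.\ $L_p(E)(\text{triv})\in\Z_p^\times$; and finally the interpolation formula relating $L_p(E)(\text{triv})$ to $L(E,1)/\Omega_E$ (with the explicit $p$-adic multiplier, a unit under the good ordinary or multiplicative hypothesis (a) — in the multiplicative case one uses the formula with the $\mathcal{L}$-invariant or the appropriate Euler factor, which is where \cite{Smult} enters) shows $L(E,1)\neq 0$. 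Hence $\rk_{\an}(E)=0$.

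The main obstacle — or rather the point requiring the most care — is the interface at the prime $p$ in the multiplicative reduction case: one must ensure that the interpolation factor relating $L_p(E)$ at the trivial character to the algebraic part of $L(E,1)$ is a $p$-adic unit (equivalently, nonzero), and that the form of the main conjecture proved in \cite{Smult} applies with hypotheses (b) and (c) rather than any stronger ``big image'' assumption. I would therefore organize the write-up so that the good ordinary case is dispatched first by a direct citation to \cite{SU} (with Kato's work for the reverse divisibility), and then treat multiplicative reduction by citing \cite{Smult}, checking in each case that (a)--(c) match the stated hypotheses of those references and that no further conditions (such as $p\nmid N$ beyond what (a) allows, or surjectivity of the mod-$p$ representation) are actually needed. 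The remaining steps — the Selmer-group bookkeeping in the paragraph above — are standard and routine.
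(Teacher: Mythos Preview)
Your approach is essentially the same as the paper's: from (d) deduce that $\Sel_{p^\infty}(E/\Q)=0$, hence $\rk(E)=0$, and then invoke the Iwasawa--Greenberg main conjecture (via \cite{SU} in the good ordinary case and \cite{Smult} in the multiplicative case) under hypotheses (a)--(c) to conclude $L(E,1)\neq 0$.

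The only difference is in packaging. The paper does not unwind the argument through the $p$-adic $L$-function and its interpolation at the trivial character; it simply cites \cite[Thm.~2(b)]{SU} and \cite[Thm.~B]{Smult}, which already state (as explained in Remark~\ref{rmk-rank0}) that $\ord_p\bigl(L(E,1)/\Omega_E\bigr)$ equals $\ord_p$ of $\#\Sel_{p^\infty}(E)$ times the Tamagawa factors, so finiteness of $\Sel_{p^\infty}(E)$ gives $L(E,1)\neq 0$ directly. Your route through ``$L_p(E)(\mathrm{triv})$ is a unit, hence by interpolation $L(E,1)\neq 0$'' is morally the same but forces you to confront the exceptional-zero phenomenon in the split multiplicative case, where the interpolation factor at the trivial character is $1-\alpha_p^{-1}=0$, not a unit as you require. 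You gesture at the $\grL$-invariant fix, and that can be made to work (using Greenberg--Stevens for $\grL(E)\neq 0$), but it is an unnecessary detour: the theorems you are citing from \cite{Smult} already absorb this analysis and output the statement about the complex $L$-value. So your write-up would be cleaner if, like the paper, you cite those results as black boxes rather than reproving their mechanism.
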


\begin{proof} If the $p$-Selmer group $S_p(E)$ of $E$ is trivial, then so is the $p^\infty$-Selmer group (denoted
$\Sel_{p^\infty}(E/\Q)$ and $\mathrm{Sel}_{p^\infty}(E)$ in \cite{SU} and \cite{Smult}, respectively), and hence the 
Mordell--Weil group $E(\Q)$ is finite. 
It also follows from \cite[Thm.~2(b)]{SU} (for the case of good reduction at $p$) and
\cite[Thm.~B]{Smult} (for the case of multiplicative reduction at $p$) that under the hypotheses on $E$ stated in the theorem, we have $L(E,1)\neq 0$.~\end{proof}

\begin{remark}\label{rmk0}{\em 
Work in progress 
\cite{SU2} should also allow $E$ to have good supersingular reduction, remove the condition (b),
and replace (c) with just the existence of one prime $\ell\neq p$ at which $E$ has potentially multiplicative or supercuspidal reduction.
 }\end{remark}
 
 \begin{remark}\label{rmk-ram}{\em 
 Suppose $\ell\mid\mid N$ and $\ell\neq p$.  The condition that $E[p]$ be ramified at $\ell$ is equivalent to $p\nmid\ord_\ell(\Delta_\ell)$ for 
 a minimal discriminant $\Delta_\ell$ of $E$ at $\ell$ \cite[Prop.~V.6.1 \& Ex.~V.5.13]{Silverman2}. 
 In particular, for $\ell\geq 5$, a Weierstrass equation $E_{A,B}:y^2=x^3+Ax+B$ that satisfies
 $\ell^6\nmid B$ if $\ell^4\mid A$ is minimal at $\ell$, and so if $\ell\mid\mid N$ then $E_{A,B}[p]$ is ramified at $\ell$ if and only if 
 $p\nmid \ord_\ell(\Delta(E_{A,B}))$.  This will be used later to interpret hypothesis (c) of Theorem~\ref{crit0-thm1} (and hypotheses (c) and (d) 
 of Theorem~\ref{crit1-thm1} below) as congruence conditions at $\ell$.}
 \end{remark}
 
 \begin{remark}\label{rmk-rank0}{\em 
 Both \cite[Thm.~2(b)]{SU} and \cite[Thm.~B]{Smult}, on which the proof of Theorem~\ref{crit0-thm1} relies,
 identify the power of $p$ dividing the product of the order of the $p^\infty$-Selmer group $\Sel_{p^\infty}(E)$ of $E$ and the Tamagawa factors of $E$
 as the power of $p$ dividing $L(E,1)/\Omega_E$, where $\Omega_E$ is the real period of~$E$. As explained in \cite{Smult},
 this relation is a consequence of the Iwasawa--Greenberg main conjecture for~$E$ (which is the main
 result of \cite{SU} and \cite{Smult}). One consequence of this equality of powers of $p$ is that if $\Sel_{p^\infty}(E)$ is finite, then $L(E,1)\neq 0$.  To conclude further that the
 Tate--Shafarevich group~$\SH(E)$ of~$E$ is finite then requires an appeal to the work of Gross--Zagier~\cite{GZ} and Kolyvagin~ \cite[Thm.~A and Cor.~B]{Koly-Euler}.}
 \end{remark}

\subsection{$p$-adic conditions for an elliptic curve to have algebraic and analytic rank 1}

Next, for $p\geq 5$ we 
have sufficient $p$-adic conditions, deduced from \cite{Z} and \cite{SZ},
for an elliptic curve over $\Q$ to have algebraic and analytic rank one:

\begin{theorem}\label{crit1-thm1}
Let $p\geq 5$ be a prime. Let $E$ be an elliptic curve over $\Q$ with conductor $N$ such that:
\begin{itemize}
\item[{\rm (a)}]
$E$ has good ordinary or multiplicative reduction at $p$; 
\item[{\rm (b)}]
$E[p]$ is an irreducible $\Gal(\bar\Q/\Q)$-module;
\item[{\rm (c)}] for all primes $\ell\mid\mid N$ such that $\ell\equiv \pm 1\pmod p$, $E[p]$ is ramified at $\ell$;
\item[{\rm (d)}]  if $N$ is not squarefree, then
 there exist at least two prime factors $\ell\mid\mid N$ with $\ell\neq p$ and where $E[p]$ is ramified;
\item[{\rm (e)}] if $E$ has multiplicative reduction at $p$ then $E[p]$ is not finite at $p$, and if $E$ has split multiplicative
reduction at $p$ then the $p$-adic Mazur--Tate--Teitelbaum $\grL$-invariant $\grL(E)$ of $E$ satisfies $\ord_p(\grL(E))=1$;
\item[{\rm (f)}]
the $p$-Selmer group $S_p(E)$ of $E$ has order $p$.
\end{itemize}
Then the rank and analytic rank of $E$ are both equal to~$1$.
\end{theorem}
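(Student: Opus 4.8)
The plan is to follow the same structure as the proof of Theorem~\ref{crit0-thm1}, invoking the rank-one analogues of the results of \cite{SU} and \cite{Smult} that have been packaged in \cite{SZ} and \cite{Z}. Concretely, the claim is that hypotheses (a)--(f) put us exactly in the situation where the $p$-converse to the theorem of Gross--Zagier--Kolyvagin applies: if $S_p(E)$ has order $p$ then the $p^\infty$-Selmer group $\Sel_{p^\infty}(E/\Q)$ has corank exactly one, and the $p$-converse then forces $\rk_{\an}(E)=1$; once the analytic rank is $1$, the Gross--Zagier formula \cite{GZ} (together with its Shimura-curve extension \cite{SZ-GZ}) produces a Heegner point of infinite order, and Kolyvagin's Euler system argument \cite{Koly-Euler} (in the form of \cite{BD-ACMC}) shows that this Heegner point generates a finite-index subgroup of $E(\Q)$, so $\rk(E)=1$ as well. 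The role of the somewhat elaborate list of local hypotheses is precisely to guarantee (i) the corank-one statement for $\Sel_{p^\infty}$ is available from the main conjecture inputs of \cite{SZ} and \cite{Z}, and (ii) the $p$-converse theorem proved there is applicable — conditions (c), (d), (e) are the ramification, multiplicity-one, and $\grL$-invariant conditions that appear in those works' hypotheses, while (a) and (b) are the standard good ordinary / multiplicative and residual irreducibility conditions.

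First I would record the Selmer bookkeeping: under (b), $S_p(E)\cong E(\Q)/pE(\Q) \oplus \SH(E)[p]$ has order $p$, so the $p^\infty$-Selmer group $\Sel_{p^\infty}(E/\Q)$ has $\Z_p$-corank either $0$ or $1$; the corank-$0$ case is excluded because it would force $S_p(E)$ to have order a positive even power of $p$ (by the nondegeneracy of the Cassels--Tate pairing on $\SH(E)[p]$ together with the Dokchitser--Dokchitser parity result \cite{DD}, which ties the parity of $\dim S_p(E)$ to the root number), so in fact the corank is exactly $1$. Second I would invoke the $p$-converse theorem: the main results of \cite{Z} (good ordinary case) and \cite{SZ} (extending to the multiplicative-reduction case, using (e) to handle the exceptional-zero phenomenon at a split multiplicative prime) state that, under hypotheses matching (a)--(e), corank$_{\Z_p}\Sel_{p^\infty}(E/\Q)=1$ implies $\rk_{\an}(E)=1$. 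Third, with $\rk_{\an}(E)=1$ in hand, I would apply Gross--Zagier \cite{GZ}, \cite{SZ-GZ} to get a non-torsion Heegner point (choosing an auxiliary imaginary quadratic field satisfying the Heegner hypothesis relative to $N$, which the local conditions on $N$ permit), and then Kolyvagin's theorem \cite{Koly-Euler}, \cite{BD-ACMC} to conclude that $\rk(E)=1$ and, as a byproduct, that $\SH(E)$ is finite.

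The main obstacle, and the point where one must be careful rather than merely citing, is checking that the precise hypotheses (a)--(f) of the theorem statement do in fact imply the (possibly differently phrased) hypotheses under which the $p$-converse theorems of \cite{Z} and \cite{SZ} are proved, and likewise that they supply an imaginary quadratic field for which both the Heegner hypothesis and the needed nonvanishing/ramification conditions hold simultaneously. In particular, condition (d) — requiring two ramified prime factors of $N$ when $N$ is not squarefree — is exactly what is needed to guarantee the existence of a suitable Heegner auxiliary field while staying within the ``definite vs.\ indefinite'' dichotomy that controls which (modular or Shimura) curve carries the relevant Heegner points; and condition (e) is what allows one to rule out a trivial $\grL$-invariant obstruction in the split multiplicative case and to know $E[p]$ is not finite at $p$ so the main-conjecture input is in its applicable range. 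The honest content of the proof is therefore the reduction of (a)--(f) to the input hypotheses of the cited theorems, after which everything follows formally; I expect the write-up to consist of one sentence establishing the Selmer corank and then a citation-by-citation verification that each hypothesis of \cite{Z}, \cite{SZ}, \cite{GZ}, \cite{SZ-GZ}, and \cite{Koly-Euler} (in the \cite{BD-ACMC} formulation) is met.
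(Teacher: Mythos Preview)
Your overall strategy matches the paper's: establish that the $p^\infty$-Selmer group has $\Z_p$-corank exactly one, then invoke the converse theorems of \cite{Z} and \cite{SZ}. Two points of difference are worth noting.

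First, your Selmer bookkeeping takes a detour. The paper argues directly from Cassels' structure theorem: $\Sel_{p^\infty}(E/\Q)\cong(\Qp/\Zp)^r\oplus F\oplus F$ as a $\Zp$-module, and since (b) forces $E(\Q)[p]=0$, the group $S_p(E)$ is exactly the $p$-torsion of $\Sel_{p^\infty}$, whose $\F_p$-dimension is $r+2\dim F[p]$. Setting this equal to $1$ gives $r=1$ and $F=0$ in one stroke. Your route through the Cassels--Tate pairing on $\SH(E)[p]$ reaches the same conclusion, but the appeal to \cite{DD} is a red herring: you do not know the root number of $E$, so the Dokchitser--Dokchitser parity theorem contributes nothing here. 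Drop it.

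Second, you split the endgame into ``$p$-converse $\Rightarrow$ analytic rank $1$'' followed by a separate ``Gross--Zagier + Kolyvagin $\Rightarrow$ algebraic rank $1$''. The paper simply cites \cite[Thm.~1.3]{Z} and \cite[Thm.~1.1]{SZ}, which already conclude \emph{both} $\ord_{s=1}L(E,s)=1$ and $\rk E(\Q)=1$. As Remark~\ref{rmk-rank1} explains, the internal logic of those theorems runs in the opposite order from what you describe: they first produce a Heegner point of $p$-indivisible index (hence non-torsion) using the main-conjecture inputs, and then \cite{SZ-GZ} yields both the analytic and algebraic rank simultaneously. Your third step is therefore redundant rather than wrong, and your description of step two understates what the cited theorems deliver.
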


\begin{proof} By a theorem of Cassels, the $p^\infty$-Selmer group of $E$ is isomorphic 
as a $\Zp$-module to $(\Qp/\Zp)^r\oplus F\oplus F$ for some integer $r$ (expected to be the rank of $E(\Q)$) and a finite group $F$.
Since $E(\Q)[p]$ is trivial under (b), the $p$-Selmer group $S_p(E)$ is the $p$-torsion of the $p^\infty$-Selmer group. Hence if $S_p(E)$ has order $p$, then $F=0$ and $r=1$.
It follows from this observation and \cite[Thm.~1.3]{Z} (for the case of good reduction) and \cite[Thm.~1.1]{SZ} (for the case of
multiplicative reduction) that under the hypotheses stated for $E$ in the theorem, we have $\ord_{s=1}L(E,s)=1$ and the rank of $E(\Q)$ is 1. \end{proof}

\begin{remark}\label{rmk1}{\em
We expect that combining the methods used to prove Theorem \ref{crit1-thm1} with the approach in \cite{S} should allow $E$ to have 
supersingular reduction at $p$. Furthermore, it should be possible to extend this result to $p=3$ under the additional restrictions
of good reduction at $p$ and that the image of the local restriction $S_p(E)\rightarrow E(\Q_p)/pE(\Q_p)$ not lie in the image of $E(\Q_p)[p]$. 
These extensions require corresponding extensions of the main results in \cite{Wan}.
}\end{remark}

\begin{remark}\label{rmk-finite}{\em We recall that $E[p]$ is ``finite at $p$'' if as a $\Gal(\bQ_p/\Qp)$-representation, 
$E[p]$ is isomorphic to the representation on the $\bQ_p$-points of a finite flat group scheme over $\Zp$ (this is always the case if
$E$ has good reduction at $p$). If $E$ has multiplicative reduction at $p$, then $E[p]$ is finite at $p$ if and only if $p\nmid\ord_p(\Delta_p)$
for a minimal discriminant $\Delta_p$ at $p$ \cite[\S2.9,~Prop.~5]{Serre-Duke}. In particular, for $p\geq 5$, a Weierstrass equation $E_{A,B}:y^2=x^3+Ax+B$ that satisfies
 $p^6\nmid B$ if $p^4\mid A$ is minimal at $p$, and so if $p\mid\mid N$ then $E_{A,B}[p]$ is finite at $p$ if and only if 
 $p\nmid \ord_p(\Delta(E_{A,B}))$.  This will be used later to interpret hypothesis (e) of Theorem~\ref{crit1-thm1} as congruence conditions at $p$.}
\end{remark}

\begin{remark}\label{rmk-rank1}{\em
The theorems \cite[Thm.~1.3]{Z} and \cite[Thm.~1.1]{SZ}, on which the proof of Theorem~\ref{crit1-thm1} relies, are consequences of the main 
results of \cite{Z} and \cite{SZ}, respectively. These results show that for an elliptic curve~$E$ as in Theorem \ref{crit1-thm1}, the indices of certain Heegner points on $E$ over appropriate imaginary quadratic
fields are not divisible by $p$ (in particular these Heegner points are non-torsion!). The proof of this indivisibility further relies on comparing \cite[Thm.~2(b)]{SU} and 
\cite[Thm.~B]{Smult} with a special value formula of Gross \cite[(7-8)]{Vatsal-GZ}, and owes much to ideas of Bertolini and Darmon~\cite{BD-ACMC}. 
These Heegner points do not necessarily come from uniformizations by modular curves as in the work of Gross and Zagier \cite{GZ}, but possibly only from uniformizations by Shimura curves,
which is treated in the work of Shouwu Zhang \cite{SZ-GZ}. That $E$ then has both rank and analytic rank~one and finite Tate--Shafarevich group follows from the main results
of \cite{SZ-GZ}.  We note in passing that once it is known that $E$ has analytic rank~one, then it follows that the Heegner points on $E$ in the setting 
of \cite{GZ} are also non-torsion, but they may have indices divisible by $p$.
}\end{remark}

\subsection{Average sizes of $p$-Selmer groups of elliptic curves in large families}\label{selavg}

For $p\in\{3,5\}$, it is easy to see that, when ordered by height, a large proportion of elliptic curves already satisfy conditions (a)--(c) of Theorem~\ref{crit0-thm1} or conditions (a)--(e) of Theorem~\ref{crit1-thm1}.  To show that a positive proportion of elliptic curves also satisfy the hypotheses (d) and (f) of Theorems~\ref{crit0-thm1} and \ref{crit1-thm1}, respectively, we require the following results on average orders of $p$-Selmer groups.

\begin{theorem}[\cite{BS2,BS3,BS5}]\label{thBS}
Let $p\leq5$ be a prime.  When elliptic curves over $\Q$ in any large family are ordered by height, the average size of the $p$-Selmer group is $p+1$.  
\end{theorem}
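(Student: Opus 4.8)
The plan is to obtain Theorem~\ref{thBS} from the orbit-counting theorems of \cite{BS2,BS3,BS5}; here we sketch the strategy behind those works. For each $p\in\{2,3,5\}$ there is a representation $V$ of a reductive group $G$ over $\Z$ — binary quartic forms under a form of $\GL_2$ for $p=2$, ternary cubic forms (genus-one models of degree $3$) under a twisted action of $\GL_3$ for $p=3$, and pairs of $5\times 5$ alternating matrices (genus-one models of degree $5$) under $\GL_2\times\GL_5$ for $p=5$ — together with a polynomial map $V\to\{(A,B)\}$ given by the ring of invariants, so that each $v\in V$ has an associated elliptic curve $E_v=E_{A(v),B(v)}$. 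The first step is algebraic: for a fixed elliptic curve $E=E_{A,B}$ over $\Q$, the nonzero elements of $S_p(E)$ are in bijection with the $G(\Q)$-orbits on $V(\Q)$ having invariants $(A,B)$ that are \emph{everywhere locally soluble}, and each such orbit contains integral representatives. Counting $p$-Selmer elements of curves of bounded height thus reduces to counting $G(\Z)$-orbits on $V(\Z)$ whose invariants have bounded height and which are locally soluble at all places.

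The second step is a geometry-of-numbers count of these integral orbits. Choosing a fundamental domain $\mathcal F$ for $G(\Z)$ acting on $G(\R)$, one applies Bhargava's averaging method — integrating the lattice-point count $\#(g\mathcal F\cdot R_X\cap V(\Z))$ over a homogeneously expanding region $R_X$ of bounded-height invariants and a compact piece of $G(\R)$, and exchanging the order of integration — so that the main term becomes a product of an archimedean volume and, after the sieve below, $p$-adic volumes. The subtle point is the behaviour in the cusp of $\mathcal F$, where $g$ runs off to infinity in the toral direction: one must show that the integral points of bounded height arising there are exactly the \emph{reducible} ones, which account for the identity element of $S_p(E)$, whereas points with nontrivial stabilizer, points that fail to be locally soluble, and points whose associated elliptic curve is singular contribute negligibly. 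This cusp analysis — together with the bound on points with extra automorphisms — is the technical heart and the principal obstacle, and it becomes successively more involved as $p$ increases because the representation is larger and its cuspidal geometry more intricate.

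The third step is the sieve. One must pass from all bounded-height integral orbits to those that are locally soluble at every finite prime, and simultaneously impose the possibly infinitely many congruence conditions defining a given \emph{large} family; this is done by inclusion–exclusion over squarefree moduli and requires a \emph{uniformity estimate}, bounding uniformly in $m$ the number of $G(\Z)$-orbits of bounded-height invariants whose reduction mod $m$ lies in a prescribed bad locus, with enough power saving to truncate the sieve. Proving such a uniform bound — via a quantitative Ekedahl-type sieve combined with the geometry-of-numbers estimates of the previous step — is the other key technical ingredient. With the sieve in place, the average over a large family of the number of nonzero $p$-Selmer elements equals a convergent product $\prod_v\mu_v$ of local densities of everywhere-locally-soluble orbits.

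The final step is to evaluate $\prod_v\mu_v$. One computes the archimedean and $p$-adic densities — the latter by explicit orbit counts over $\Zp$, or conceptually via a mass formula and the fact that the relevant semisimple group has trivial Tamagawa number — and checks that the product equals exactly $p$; the convergent Euler factors coming from the congruence conditions defining the large family cancel between numerator and denominator, so this value is independent of which large family is chosen. Adding back the identity class of $S_p(E)$ identified in the cusp analysis yields average size $1+p=p+1$, as claimed.
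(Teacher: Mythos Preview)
The paper does not give its own proof of Theorem~\ref{thBS}; it simply cites \cite{BS2,BS3,BS5} and, in the remark immediately following, records that the argument proceeds by geometry-of-numbers and sieve methods applied to integral models of $p$-Selmer elements --- binary quartic forms, ternary cubic forms, and quintuples of quinary alternating $2$-forms for $p=2,3,5$ respectively, with the theory of such models coming from \cite{BSD,CFS,Fishermin}. Your sketch is a faithful and considerably more detailed expansion of exactly that strategy (orbit parametrization, averaging over a fundamental domain, cusp analysis separating the identity Selmer class, uniformity estimate enabling the sieve to large families, and the local mass computation yielding $p$), so the approaches agree.

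One concrete slip: for $p=5$ the representation is not \emph{pairs} of $5\times5$ alternating matrices under $\GL_2\times\GL_5$, but \emph{quintuples} of $5\times5$ alternating $2$-forms --- that is, $V\cong W_1\otimes\wedge^2 W_2$ with $\dim W_1=\dim W_2=5$ --- acted on by (a form of) $\GL_5\times\GL_5$. This is exactly what the paper's remark calls ``quintuples of quinary alternating $2$-forms.'' The distinction matters: the invariant theory, the dimension count matching the height normalization, and the cusp geometry are all specific to the quintuple representation, and the pair representation you name does not parametrize genus-one models of degree~$5$. Aside from this misidentification, your outline is accurate.
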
 
We recall what is meant by a ``large family'' of elliptic curves.
For each prime~$\ell$, let
$\Sigma_\ell$ be a closed subset of $\{(A,B)\in\Z_\ell^2 :
\Delta(A,B):=-4A^3-27B^2\neq 0\}$ with boundary of measure $0$.
To such a collection $\Sigma=(\Sigma_\ell)_\ell$,
we associate the set $F_\Sigma$ of elliptic curves over $\Q$, where
$E_{A,B}\in F_\Sigma$ if and only if $(A,B)\in\Sigma_\ell$ for all~$\ell$.
We then say that $F_\Sigma$ is a family of elliptic curves over~$\Q$ 
that is {\it defined by congruence conditions}.
(Although we shall not do so in this article, we can also impose ``congruence conditions at infinity'' on $F_\Sigma$, by
insisting that an elliptic curve $E_{A,B}$ belongs to $F_\Sigma$ if
and only if $(A,B)$ belongs to $\Sigma_\infty$, where $\Sigma_\infty$
consists of all $(A,B)$ with $\Delta(A,B)$ positive, or negative, or either.)

A family $F=F_\Sigma$ of elliptic curves defined
by congruence conditions is said to be {\it large} if, for all
sufficiently large primes $\ell$, the set $\Sigma_\ell$ contains all
$E_{A,B}$ with $(A,B)\in\Z_\ell^2$ such that $\ell^2\nmid\Delta(A,B)$. For
example, the family of all elliptic curves is large, as is any
family of elliptic curves $E_{A,B}$ defined by finitely many congruence
conditions on $A$ and $B$.  The family
of all semistable elliptic curves is also large. Any large family
makes up a positive proportion of all elliptic curves~\cite[Thm.~3.17]{BS2}.

\begin{remark}{\em 
Theorem~\ref{thBS} is proven in \cite{BS2,BS3,BS5} by applying techniques from the geometry-of-numbers and sieve methods to count integral models of
$p$-Selmer group elements---as binary quartic forms, ternary cubic forms, and quintuples of quinary alternating 2-forms---for $p=2$, 3, and 5, respectively.  The~theory
of such integral models was developed in works of Birch--Swinnerton-Dyer~\cite{BSD}, Cremona--Fisher--Stoll~\cite{CFS}, and Fisher~\cite{Fishermin}, respectively. 
We note that a refinement of Theorem~\ref{thBS}, establishing the equidistribution of the local images in $E(\Q_\ell)/pE(\Q_\ell)$ of $p$-Selmer group elements of elliptic curves $E$ lying in sufficiently small $\ell$-adic discs, was given in \cite{rankone}.    
Such a refinement may be useful in future improvements to Theorem~\ref{bsdcor}, particularly when using $p=3$ (see Remark~\ref{rmk1}).
}\end{remark}

\subsection{Root numbers and Selmer parity}

Theorem \ref{thBS} alone is not sufficient to guarantee the existence of curves satisfying either Condition~(d) of Theorem~\ref{crit0-thm1} or Condition~(f) of Theorem~\ref{crit1-thm1}.   
In order to deduce positive proportion statements for the
individual ranks~0 and~1, we will make use of information
regarding the distribution of the {\it parity} of the ranks---or of
the $p$-Selmer ranks---of these curves.  In this direction, 
in \cite{BS5}, a union $\mathcal F$ of large families was constructed in which half of the members of $\mathcal F$ have root number $+1$ and half $-1$.  The following theorem of Dokchitser--Dokchitser~\cite{DD} 
(see also Nekov\'a\v{r}~\cite{Nekovar}) then allows us to relate these root numbers with $p$-Selmer ranks. 

\begin{theorem}[Dokchitser--Dokchitser]\label{thDD}
Let $E$ be an elliptic curve over $\Q$ and let $p$ be any prime. 
Let $s_p(E)$ and $t_p(E)$ denote the rank of the $p$-Selmer group of $E$ and the rank of $E(\Q)[p]$, respectively.
Then the quantity $r_p(E):=s_p(E)-t_p(E)$ is even if and only if the root
number of $E$ is~$+1$.
\end{theorem}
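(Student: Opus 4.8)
The assertion $(-1)^{r_p(E)}=w(E)$, with $w(E)=\prod_v w_v(E)$ the global root number, is the $p$-parity conjecture for $E/\Q$. From the exact sequence $0\to E(\Q)/pE(\Q)\to\Sel_p(E)\to\SH(E)[p]\to0$ together with Cassels' structure theorem for $\Sel_{p^\infty}(E/\Q)$ and the (alternating) Cassels pairing on $\SH(E)$, the integer $r_p(E)=s_p(E)-t_p(E)$ is congruent modulo $2$ to the $\Z_p$-corank of $\Sel_{p^\infty}(E/\Q)$, so it suffices to compare the parity of that corank with $w(E)$. The plan, following Dokchitser--Dokchitser~\cite{DD} (compare Nekov\'a\v{r}~\cite{Nekovar}), is to reduce the general case to cases that are already known, by a descent along Brauer relations.

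First I would record the known cases. If $L(E,1)\neq0$ then, by the work of Gross--Zagier, Kolyvagin and Kato, $E(\Q)$ and $\SH(E)[p^\infty]$ are finite, so $r_p(E)=0$, while $w(E)=+1$ since $\ord_{s=1}L(E,s)$ is then even; if $\ord_{s=1}L(E,s)=1$ then Gross--Zagier and Kolyvagin give that $E$ has rank $1$ with finite $\SH(E)$, so $r_p(E)=1$ and $w(E)=-1$. Hence $p$-parity holds for $E/\Q$ whenever $\ord_{s=1}L(E,s)\leq1$; the analogous argument, using Heegner points on modular or Shimura curves and the anticyclotomic Euler system in the style of Bertolini--Darmon and Cornut--Vatsal, yields it for $E$ over an imaginary quadratic field $K$, and over the ring class fields of $K$, whenever the relevant twisted $L$-functions have order of vanishing at most $1$.

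The heart of the matter is the Brauer-relation descent. For a finite Galois extension $F/\Q$ with group $G$ and a Brauer relation $\Theta=\sum_i n_i(H_i)$ in $G$ (so that $\bigoplus_i n_i\operatorname{Ind}_{H_i}^G\mathbf 1=0$ as virtual rational representations), one attaches a \emph{regulator constant} in $\Q_p^\times/(\Q_p^\times)^2$ to any $\Q_p[G]$-module carrying a nondegenerate $G$-invariant pairing. Applying this to the $p^\infty$-Selmer group of $E/F$---and, crucially, checking that a suitable Selmer group is skew-self-dual under the Cassels--Tate/Flach pairing, so that the construction is legitimate \emph{without} assuming $\SH(E/F)$ finite---yields an identity expressing $\prod_i\bigl((-1)^{r_p(E/F^{H_i})}\bigr)^{n_i}$ as a product of purely local invariants, built from local Galois cohomology and local Tamagawa-type factors of $E$. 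A parallel classical computation expresses $\prod_i w(E/F^{H_i})^{n_i}$ as a product of local root numbers. One then checks, place by place, that these two products of local quantities agree, using the explicit formulas of Deligne and Rohrlich for local root numbers on one side and the corresponding local computations of the Selmer-side terms on the other.

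Granting this, the proof concludes as follows. Given $E/\Q$ and $p$, choose $F/\Q$---typically a dihedral extension of $\Q$ of degree $2p^n$ containing an imaginary quadratic field $K$, with $K$ selected via the nonvanishing theorems for quadratic twists of $L(E,s)$ of Waldspurger, Bump--Friedberg--Hoffstein, Murty--Murty and Friedberg--Hoffstein so as to control the analytic ranks of $E$ over the relevant subfields---together with a Brauer relation in $\Gal(F/\Q)$ in which every term $F^{H_i}$ with $n_i\neq0$ other than $\Q$ itself is a field over which $p$-parity is already known by the previous step; the local comparison then forces $p$-parity for $E/\Q$. The step I expect to be hardest is precisely this regulator-constant identity: making sense of and computing the regulator constant of $\Sel_{p^\infty}(E/F)$ with no finiteness hypothesis on $\SH$ requires the self-duality of a carefully chosen Selmer group (or, in Nekov\'a\v{r}'s approach, the formalism of Selmer complexes and their generalized Cassels--Tate pairings), and the subsequent matching of the Selmer-side local terms with local root numbers is delicate at primes of additive reduction and at $p$ itself. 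For $p=2$ one may instead bypass the general machinery, arguing via the Cassels pairing on the $2$-Selmer group and Monsky's parity formula.
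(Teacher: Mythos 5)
The paper does not prove Theorem~\ref{thDD}; it states it as a known result of Dokchitser--Dokchitser and simply cites~\cite{DD} (with a pointer to Nekov\'a\v{r}~\cite{Nekovar}), so there is no internal proof to compare your attempt against.

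On its own terms, your write-up is a plausible outline of the strategy in the cited literature: reduce the parity of $r_p(E)=s_p(E)-t_p(E)$ to the parity of the $\Z_p$-corank of $\Sel_{p^\infty}(E/\Q)$ via Cassels' structure theorem, settle the base cases $\ord_{s=1}L(E,s)\leq 1$ via Gross--Zagier and Kolyvagin (and their anticyclotomic analogues over ring class fields), and then propagate parity along Brauer relations using regulator constants of self-dual Selmer modules, matching the resulting local invariants against local root numbers. You also correctly flag the two genuinely hard points: constructing a generalized Cassels--Tate/Flach pairing that gives self-duality of the relevant Selmer group without assuming $\SH$ finite, and the place-by-place comparison with local root numbers, which is delicate at primes of additive reduction and at $p$ itself. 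However, none of these steps is actually carried out: the regulator constant is not defined, the Brauer relation and auxiliary extension are not exhibited, the local identity to be verified is never stated, and no local computation is performed at a single place. As a blind reconstruction of the shape of the Dokchitser--Dokchitser and Nekov\'a\v{r} arguments this is reasonable, but it is a plan rather than a proof, and the paper is right to treat the theorem as an external input rather than something to re-prove.
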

The theorem of Dokchitser and Dokchitser, together with the following theorem which follows from \cite[\S5]{BS5}, will allow us to deduce that many elliptic curves have even (resp.\ odd) $p$-Selmer parity.

\begin{theorem}\label{equithm}
Let $F$ be any large family of elliptic curves over $\Q$ defined by congruence conditions modulo powers of primes $p$ such that 
$p\equiv 1\pmod 4$. Then there exists a finite union $F'$ of large subfamilies of $F$ such that, when elliptic curves in~$F$ and~$F'$ are ordered by height, the root numbers in $F'$ are equidistributed and $F'$ contains a density of greater than $55.01\%$ of the elliptic curves in~$F$. 
\end{theorem}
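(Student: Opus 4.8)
The global root number $w(E)\in\{\pm1\}$ of an elliptic curve $E/\Q$ is the product $\prod_v w_v(E)$ of local root numbers, with $w_\infty(E)=-1$ for every $E$ and $w_p(E)=+1$ at every prime $p$ of good reduction; for $p\ge 5$ the map $(A,B)\mapsto w_p(E_{A,B})$ is locally constant on $\{(A,B)\in\Z_p^2:\Delta(A,B)\neq 0\}$, and if $E_{A,B}$ has multiplicative reduction at $p$ then $w_p(E_{A,B})=-\bigl(\frac{-c_6}{p}\bigr)$ with $c_6=-864B$. Since $\#\{E/\Q:H(E)<X\}\sim cX^{5/6}$, asserting that the root numbers in a family $F'$ are equidistributed is the same as the cancellation estimate
\[
\sum_{E\in F',\,H(E)<X}w(E)=o\bigl(X^{5/6}\bigr).
\]
The plan, following \cite[\S5]{BS5}, is to build $F'$ as an explicit finite disjoint union of large subfamilies of $F$ on each of which this cancellation is forced by a single Legendre symbol; that $F'$ be a union of large families is also what we will later need in order to apply Theorem~\ref{thBS} to it.

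Fix a large integer $L_0$ and let $\mathcal P$ be the set of primes $\ell\equiv 3\pmod 4$ with $\ell\le L_0$. By hypothesis every prime occurring in the congruence conditions defining $F$ is $\equiv 1\pmod 4$, so no $\ell\in\mathcal P$ is among them; consequently, for each $\ell\in\mathcal P$, ``$E$ has multiplicative reduction at $\ell$''---equivalently $\ell\mid\Delta(A,B)$ and $\ell\nmid A$---is a nontrivial clopen congruence condition at $\ell$ that is ``orthogonal'' to the conditions defining $F$. Put
\[
G_\ell=\{\,E_{A,B}\in F:\ \ell\mid\Delta(A,B),\ \ell\nmid A,\ \text{ and }\ \ell'\nmid\Delta(A,B)\ \text{or}\ \ell'\mid A\ \text{ for all }\ell'\in\mathcal P\text{ with }\ell'<\ell\,\},
\]
the set of curves in $F$ whose least prime of multiplicative reduction lying in $\mathcal P$ is $\ell$, and set $F'=\bigsqcup_{\ell\in\mathcal P}G_\ell$. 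Each $G_\ell$ is cut out of $F$ by congruence conditions at the finitely many primes $\le L_0$ and is therefore a large subfamily of $F$. Because these conditions live at primes $\equiv 3\pmod 4$ and the conditions defining $F$ live at primes $\equiv 1\pmod 4$, the corresponding local densities multiply, so the density of $F'$ in $F$ equals the density, among all elliptic curves over $\Q$, of those possessing a prime of multiplicative reduction in $\mathcal P$; this density equals $1-\prod_{\ell\in\mathcal P}(1-\rho_\ell)$, where $\rho_\ell$ is the local density at $\ell$ of multiplicative reduction. Since $\rho_\ell\asymp 1/\ell$ and $\sum_{\ell\equiv 3(4)}1/\ell$ diverges, this density tends to $1$ as $L_0\to\infty$; fix $L_0$ once and for all so that it exceeds $55.01\%$.

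It remains to prove $\sum_{E\in G_\ell,\,H(E)<X}w(E)=o(X^{5/6})$ for each $\ell\in\mathcal P$. For $E_{A,B}\in G_\ell$ there is multiplicative reduction at $\ell$, so $w(E_{A,B})=-\bigl(\frac{864}{\ell}\bigr)\bigl(\frac{B}{\ell}\bigr)\,W(E_{A,B})$ where $W(E)=\prod_{v\neq\ell}w_v(E)$; moreover $\ell\mid\Delta(A,B)$ together with $\ell\nmid A$ forces $B\equiv\beta(A)$ or $B\equiv-\beta(A)\pmod\ell$ for some $\beta(A)$ with $\beta(A)^2\equiv-4A^3/27\pmod\ell$, and since $\ell\equiv 3\pmod 4$ we have $\bigl(\frac{-\beta(A)}{\ell}\bigr)=-\bigl(\frac{\beta(A)}{\ell}\bigr)$; hence $w(E_{A,B})$ takes opposite signs on the two branches $B\equiv\beta(A)$ and $B\equiv-\beta(A)$. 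Separating these branches,
\[
\sum_{E\in G_\ell,\,H(E)<X}w(E)=-\Bigl(\tfrac{864}{\ell}\Bigr)\sum_{A}\Bigl(\tfrac{\beta(A)}{\ell}\Bigr)\bigl(\Sigma_+(A)-\Sigma_-(A)\bigr),\qquad \Sigma_\pm(A)=\sum_{\substack{B\equiv\pm\beta(A)\ (\bmod\ell)\\ E_{A,B}\in F,\ H(E_{A,B})<X}}W(E_{A,B}).
\]
The two progressions $B\equiv\pm\beta(A)\pmod\ell$ contain the same number of admissible $B$ up to a relative error $o(1)$, and the estimate reduces to showing that $\Sigma_+(A)$ and $\Sigma_-(A)$ have the same main term---that is, that the away-from-$\ell$ part $W$ of the root number does not correlate with the residue of $B$ modulo $\ell$---with an error that sums over $A$ to $o(X^{5/6})$. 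This decoupling is the crux of the argument, and it is exactly what is established in \cite[\S5]{BS5} by the geometry-of-numbers method together with a squarefree sieve for the binary form $\Delta(A,B)$; the sieve is needed only on the thick region $\{H(E_{A,B})<X,\ |\Delta(A,B)|\ge X^{3/5}\}$---the complementary ``cuspidal'' locus contains only $o(X^{5/6})$ curves---so the argument is unconditional. Granting this, summing over the finitely many $\ell\in\mathcal P$ and dividing by $\#\{E\in F':H(E)<X\}\asymp X^{5/6}$ yields the equidistribution of root numbers in $F'$; together with the density estimate of the previous paragraph and the fact that $F'$ is a finite union of large subfamilies of $F$, this proves the theorem.
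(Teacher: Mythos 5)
Your argument diverges from the paper's in a way that introduces a genuine gap. The decisive step in your proof---that $\Sigma_+(A)$ and $\Sigma_-(A)$ have the same main term, i.e.\ that the away-from-$\ell$ root number $W(E_{A,B})$ does not correlate with the residue of $B$ modulo $\ell$---is asserted to be ``exactly what is established in \cite[\S5]{BS5} by the geometry-of-numbers method together with a squarefree sieve.'' This is not what is proved there. As the paper itself records immediately after the statement of Theorem~\ref{equithm}: ``By construction, the elliptic curves $E\in F'$ in Theorem~\ref{equithm} have the property that $E$ and its $-1$-twist $E_{-1}$ both lie in $F'$ and, moreover, have root numbers of opposite sign.'' In other words, \cite[\S5]{BS5} constructs $F'$ as a union of large subfamilies closed under the height-preserving involution $(A,B)\mapsto(A,-B)$ and proves, by a local analysis concentrated at the prime $2$, that this involution \emph{provably flips} the global root number on $F'$. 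Equidistribution in $F'$ is then exact by pairing, with no cancellation estimate at all; the $55.01\%$ is the density on which the twist-flip can be forced, and the twist-closure of $F'$ is later used essentially in the proof of Corollary~\ref{rank0or1cor} (when intersecting with the twist-stable set $S_1(5)$). Your $F'$ is neither twist-closed nor controlled by any condition at $2$, so it cannot play the same role.

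Beyond the misattributed citation, the decoupling you need is not merely unproved but false in the form stated, and hard in the averaged form. Your two branches $B\equiv\pm\beta(A)\pmod\ell$ are exchanged, at the level of residues mod $\ell$, by $B\mapsto-B$; but under that same substitution $w_p(E_{A,B})=-\bigl(\tfrac{864B}{p}\bigr)$ also changes sign at \emph{every} odd prime $p\equiv 3\pmod 4$ at which $E_{A,B}$ has multiplicative reduction, since $\bigl(\tfrac{-1}{p}\bigr)=-1$, and the local factor at $2$ changes as well. So $W(E_{A,B})$ does correlate with the residue of $B$ modulo $\ell$, through the parity of the number of multiplicative primes $\equiv 3\pmod 4$ dividing $\Delta(A,B)$. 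To recover $\Sigma_+(A)\approx\Sigma_-(A)$ after averaging you would have to control a Möbius-type sum over the prime factorization of the binary form $\Delta(A,B)$ in the two-parameter family ordered by height; this is exactly the kind of root-number equidistribution statement that is open in general and that the twist pairing is designed to sidestep. A tell-tale symptom of the gap: your construction would, as $L_0\to\infty$, produce $F'$ of density arbitrarily close to $100\%$ of $F$ with equidistributed root number---a much stronger conclusion than Theorem~\ref{equithm}. If that were available by these methods, the paper would not stop at $55.01\%$.
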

In particular, Theorems~\ref{thDD} and \ref{equithm} together imply that, for any prime $p$, a density of greater than $55.01\%$ of all elliptic curves over $\Q$ have equidistributed parity of $p$-Selmer rank.  
By construction, the elliptic curves $E\in F'$ in Theorem~\ref{equithm} 
have the property that $E$ and its $-1$-twist $E_{-1}$ both lie in $F'$ and, moreover, have root numbers of opposite sign.

\section{Proofs of Theorems \ref{bsdcor} and \ref{shacor}}

\subsection{Setup and notation}

For a large family $F$ of elliptic curves $E_{A,B}$, let $\mu(F)$ denote the density of all elliptic curves over~$\Q$, when ordered by height, lying in $F$.  When $F=F_\Sigma$ is a large family defined by congruence conditions, by \cite[Thm.~3.17]{BS2} the density 
$\mu(F)$ equals the product of the local densities $\mu_\ell(F)$ over all primes $\ell$, where $\mu_\ell(F) = \mu_\ell(\Sigma_\ell)$ 
equals the measure of 
$\Sigma_\ell\subset\Z_\ell^2$ divided by $1-\ell^{-10}$; here $1-\ell^{-10}$ is the measure of the set of $(A,B)\in\Z_\ell^2$ such that
$\ell^6\nmid B$ whenever~$\ell^4\mid A$.

We begin
this section by estimating the densities of three large families, $S_0(5)$, $S_1'(5)$,  and $S_1(5)$,
that satisfy many of the conditions of Theorems~\ref{crit0-thm1}~or~\ref{crit1-thm1} for the prime $p=5$.
These families are defined as follows. 
For any prime $p\geq 5$,
let $S_0(p)$ be the set of elliptic curves $E_{A,B}$ over $\Q$ such that: 
\begin{itemize} 
\item $E_{A,B}$ has good ordinary or multiplicative reduction at $p$;
\end{itemize}
let $S_1'(p)$ be the subset of curves $E_{A,B}\in S_0(p)$ also satisfying:
\begin{itemize}
\item if $E_{A,B}$ has multiplicative reduction at $p$, then $p\nmid \ord_p(\Delta(A,B))$,
\item if $E_{A,B}$ has split multiplicative reduction at $p$, then $\ord_p(\grL(E_{A,B}))=1$;
\end{itemize}
and let $S_1(p)$ be the subset of curves $E_{A,B}\in S_1'(p)$ also satisfying:
\begin{itemize}
\item $p\nmid \ord_\ell(\Delta(A,B))$ for all primes $\ell\equiv\pm 1\!\pmod p$ such that $\ord_\ell(\Delta(A,B))>0$.
\end{itemize}
Then $S_0(p)$ is the set of curves satisfying (a) of Theorem~\ref{crit0-thm1}; $S_1'(p)$ is contained in the set of curves satisfying (a) and (e) of Theorem~\ref{crit1-thm1} (see Remark~\ref{rmk-finite}); and $S_1(p)$ is contained in the set 
of curves satisfying (a), (c), and (e) of Theorem~\ref{crit1-thm1} (see Remark~\ref{rmk-ram}).  All three of these sets $S_0(p)\supset S_1'(p)\supset S_1(p)$ are large families; this will be demonstrated during the course of the proofs of Lemmas~\ref{S0-density},
\ref{S1prime-density}, and~\ref{S1-density}.

For any large family $F$, let $\mu_\equi(F)$ denote the supremum of $\mu(F')$ over all finite unions~$F'$ of large subfamilies of~$F$ for which the root numbers of elliptic curves in $F'$ are equidistributed.  We conjecture that for any large family $F$, we have $\mu_\equi(F)=\mu(F)$.  Although this seems difficult to prove at the moment, we are nevertheless able to show
that for both of the sets 
$F=S_0(5)$ or~$S_1'(5)$,
we have $\mu_\equi(F) = \kappa \cdot \mu(F)$ with $\kappa\geq .5501$;
this will follow from Theorem~\ref{equithm} together with the fact (demonstrated in \S3.2) 
that $S_0(5)$ and~$S_1'(5)$ are large
families defined by congruence conditions modulo powers of~5.

\subsection{The densities of $S_0(5)$, $S_1'(5)$, and $S_1(5)$}

The next three lemmas give the densities of $S_0(5)$, $S_1'(5)$, and $S_1(5)$  among all elliptic curves $E$ over~$\Q$.

\begin{lemma}\label{S0-density} 
We have $\mu(S_0(5))=\displaystyle \frac{4\cdot 5^{10}}{5(5^{10}-1)}> .8$.
\end{lemma}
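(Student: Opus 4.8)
The plan is to compute $\mu(S_0(5))$ as a product of local densities $\mu_\ell(S_0(5))$ over all primes $\ell$, invoking \cite[Thm.~3.17]{BS2}. The key observation is that the defining condition of $S_0(5)$ — good ordinary or multiplicative reduction at $5$ — is a single congruence condition at the prime $5$, and imposes no constraint at any other prime. Hence $\mu_\ell(S_0(5)) = 1$ for all $\ell \neq 5$, and the entire product collapses to the single local factor $\mu_5(S_0(5))$. This also shows immediately that $S_0(5)$ is a large family: for every prime $\ell \neq 5$ (in particular every sufficiently large $\ell$), $\Sigma_\ell = \{(A,B) \in \Z_\ell^2 : \Delta(A,B) \neq 0\}$ contains all $E_{A,B}$ with $\ell^2 \nmid \Delta(A,B)$.

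So the task reduces to computing $\mu_5(S_0(5))$, which by the setup in \S3.1 equals the measure of $\Sigma_5 \subset \Z_5^2$ divided by $1 - 5^{-10}$. Here $\Sigma_5$ is the set of $(A,B) \in \Z_5^2$ with $\Delta(A,B) \neq 0$ such that $E_{A,B}$ has good ordinary or multiplicative reduction at $5$, and also satisfies the minimality condition $5^6 \nmid B$ whenever $5^4 \mid A$. The complement within the minimal models consists of curves with good supersingular or additive reduction at $5$. The plan is to enumerate these by reduction type modulo $5$: good reduction corresponds to $5 \nmid \Delta(A,B)$, and among those the supersingular ones form a proportion of the good-reduction curves that can be computed by counting supersingular $j$-invariants (equivalently, supersingular reduced Weierstrass equations) over $\F_5$; additive reduction corresponds to $5 \mid \Delta(A,B)$ together with $5 \mid c_4$, i.e. $5 \mid A$, cut out by further congruences mod $5^2, 5^3, \dots$ on $(A,B)$. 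Summing the geometric-type series over these congruence classes (and remembering to excise the non-minimal locus $5^4 \mid A, 5^6 \mid B$) yields the measure of $\Sigma_5$.

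I would organize the measure computation as follows: first compute the measure of $\{(A,B) \in \Z_5^2 : 5 \nmid \Delta\}$ and split off the supersingular fraction; then compute the measure of the additive-reduction locus $\{5 \mid A, \ 5 \mid \Delta\}$ (being careful about which of these pairs give multiplicative versus additive reduction after minimalization, via Tate's algorithm); then subtract the non-minimal locus. After assembling these pieces the answer should come out to $\mu_5(\Sigma_5) = \dfrac{4 \cdot 5^{10}}{5^{10}} \cdot \dfrac{1}{5} \cdot (\text{correction})$ — more precisely one expects $\mu_5(S_0(5)) = \dfrac{\text{meas}(\Sigma_5)}{1 - 5^{-10}} = \dfrac{4 \cdot 5^{10}}{5(5^{10}-1)}$, and the final bound $> .8$ follows since $\dfrac{4 \cdot 5^{10}}{5(5^{10}-1)} > \dfrac{4}{5} = .8$.

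The main obstacle I anticipate is the bookkeeping at $5$: correctly partitioning $\Z_5^2$ by Kodaira/reduction type, in particular distinguishing potentially-multiplicative (which after minimalization is actually additive, or multiplicative depending on the valuations) from genuinely additive reduction, and handling the interaction of the reduction-type congruences with the minimality condition $5^6 \nmid B$ when $5^4 \mid A$. Counting supersingular elliptic curves over $\F_5$ is standard (the supersingular $j$-invariant over $\F_5$ is $j = 0$, and one must weight by the number of $(A,B) \bmod 5$ reducing to it, accounting for extra automorphisms), but folding that count correctly into the $5$-adic measure — so that the good-ordinary-plus-multiplicative piece comes out to exactly $\tfrac{4}{5}$ of the full minimal-model measure at $5$ — is where the computation must be done with care. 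Once the local factor at $5$ is pinned down, the global statement is immediate from the product formula and the observation that all other local factors are $1$.
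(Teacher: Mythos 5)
Your overall framework matches the paper's: use the product formula from \cite[Thm.~3.17]{BS2}, note that $S_0(5)$ imposes no constraint away from $5$ so all other local factors equal $1$, and reduce to computing $\mu_5(S_0(5))$ as $\mathrm{meas}(\Sigma_5)/(1-5^{-10})$. Your proposal also correctly guesses the answer. However, the actual computation of the local measure at $5$ is never carried out; you only sketch a partition by reduction type and assert the pieces ``should come out to'' the right number, explicitly flagging the bookkeeping as an unresolved obstacle. That bookkeeping is precisely the content of the lemma, so as written the proof is incomplete.

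Two concrete remarks on your plan. First, the worry about ``which pairs give multiplicative versus additive reduction after minimalization, via Tate's algorithm'' is a non-issue: for $p\geq 5$ the Weierstrass model $E_{A,B}$ subject to $p^6\nmid B$ if $p^4\mid A$ is \emph{already} minimal at $p$ (Silverman, Rem.~VII.1.1), so the reduction type at $5$ is read off directly from Prop.~VII.5.1 with no minimalization step. Second, and more importantly, you miss the simplification that makes the calculation trivial. For $p=5$: good reduction means $5\nmid\Delta$; among those, ordinary means $5$ does not divide the coefficient of $x^{4}$ in $(x^3+Ax+B)^{2}$, which is $2A$, i.e.\ $5\nmid A$; and multiplicative reduction means $5\mid\Delta$ and $5\nmid A$. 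Hence ``good ordinary or multiplicative'' is exactly the single congruence $5\nmid A$. (Equivalently, your supersingular locus is $\{5\mid A,\ 5\nmid B\}$ and your additive locus is $\{5\mid A,\ 5\mid B\}$, so the excluded set is just $\{5\mid A\}$ --- there is no need to count supersingular $j$-invariants or weight by automorphisms.) From $\mathrm{meas}\{5\nmid A\}=4/5$, the non-minimal locus $\{5^4\mid A,\ 5^6\mid B\}$ of measure $5^{-10}$ lying inside the complement, one gets $\mu_5(S_0(5))=\tfrac{4/5}{1-5^{-10}}$ directly, which is what the paper does. Your route would ultimately reach the same number, but it substitutes a multi-case Kodaira-type enumeration (with attendant pitfalls you yourself flag) for what is a one-line observation, and the derivation is left undone.
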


\begin{proof}
Let $p\geq 5$ be a prime.
It follows from \cite[Rem.~VII.1.1]{Sil} that $E_{A,B}$ is a minimal Weierstrass equation at $p$. Therefore,
$E_{A,B}$ has good reduction at $p$ if and only if $p\nmid \Delta(A,B)$ \cite[Prop.~VII.5.1(a)]{Sil}.
 In this case, the condition that $E_{A,B}$ has ordinary reduction is that $p$ not divide the coefficient
 of $x^{p-1}$ in $(x^3+Ax+B)^{(p-1)/2}$ \cite[Thm.~V.4.1(a)]{Sil}. 
 Similarly, $E_{A,B}$ has multiplicative reduction at $p$ if and only if $p\mid\Delta(A,B)$ but $p\nmid A$
 \cite[Prop.~VII.5.1(b)]{Sil}.  These are just congruence conditions modulo $p$, and so $S_0(p)$
 is a large family. If $p=5$, then these conditions together amount to
 $$
 5\nmid A.
 $$
 The measure of the set of $(A,B)\in\Z_5^2$ satisfying this condition is just $\frac{4}{5}$, while the measure of the set of $(A,B)\in\Z_5^2$
 satisfying $5^6\nmid B$ whenever $5^4\mid A$ is $1-\frac{1}{5^{10}}$. It follows that 
 $$
 \mu(S_0(5)) = \frac{4}{5}\cdot (1-\frac{1}{5^{10}})^{-1} = \frac{4\cdot 5^{10}}{5(5^{10}-1)}.
 $$
 \hfill \end{proof}
 
 \begin{lemma}\label{S1prime-density}
We have $\displaystyle \mu(S_1'(5)) = 
 \left( \frac{99}{125} - \frac{19}{125}\cdot\frac{4}{5^5-1}\right) \left(1-\frac{1}{5^{10}}\right)^{-1} = .7918054\ldots .$
\end{lemma}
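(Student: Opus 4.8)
The plan is to compute $\mu(S_1'(5))$ as an Euler product $\prod_\ell \mu_\ell(S_1'(5))$, exactly as permitted by \cite[Thm.~3.17]{BS2}. Since the extra conditions defining $S_1'(5)$ beyond those of $S_0(5)$ are all conditions at the prime $p=5$ (they constrain $\ord_5(\Delta(A,B))$ and the $\grL$-invariant, which depends only on the $5$-adic behavior of $E_{A,B}$), we have $\mu_\ell(S_1'(5)) = \mu_\ell(S_0(5)) = 1$ for every prime $\ell\neq 5$. So the whole computation reduces to determining the $5$-adic density, and verifying along the way that the conditions at $5$ are (infinitely many) congruence conditions modulo powers of $5$, so that $S_1'(5)$ is a large family defined by congruence conditions modulo powers of $5$ (which is also what is needed for the $\mu_\equi$ claim quoted in \S3.1).

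First I would recall from the proof of Lemma~\ref{S0-density} that at $p=5$ the curve $E_{A,B}$ is minimal, has good or multiplicative reduction iff $5\nmid A$, and that good-ordinary/multiplicative together amount to $5\nmid A$. Next I would separate the $(A,B)\in\Z_5^2$ with $5\nmid A$ into the good-reduction locus ($5\nmid\Delta(A,B)$, automatically satisfying the new conditions vacuously) and the multiplicative-reduction locus ($5\mid\Delta(A,B)$, $5\nmid A$), where the new conditions bite. On the multiplicative locus I would impose: (i) $5\nmid\ord_5(\Delta(A,B))$, i.e.\ $\ord_5(\Delta)\not\equiv 0\pmod 5$; and (ii) in the split case, $\ord_5(\grL(E_{A,B}))=1$. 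For (ii) I would use the known formula $\grL(E) = \log_p(q_E)/\ord_p(q_E)$ for the Tate parameter $q_E$, so that $\ord_5(\grL(E))=1$ translates into the condition $\ord_5(\log_5 q_E) = \ord_5 q_E + 1 = \ord_5(\Delta(A,B))+1$ (using $\ord_p q_E = \ord_p \Delta_p$), which — after expanding $\log_5(1+x)$ — becomes an explicit congruence condition on the $5$-adic expansion of $q_E$, hence on $(A,B)$ modulo a power of $5$. Distinguishing split from nonsplit multiplicative reduction is itself a congruence condition modulo $5^2$ (it is detected by whether $-c_6/c_4$ — or the relevant unit — is a square in $\Z_5^\times$). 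The bookkeeping then produces a geometric series in powers of $5^{-1}$ indexed by $k=\ord_5(\Delta(A,B))$: for each $k\geq 1$ one computes the measure of $\{(A,B): 5\nmid A,\ \ord_5(\Delta)=k\}$ and multiplies by the proportion of that set surviving conditions (i) and (ii), then sums over $k$. The terms with $5\mid k$ contribute $0$; among the rest, the nonsplit multiplicative curves survive automatically and the split ones survive in the proportion cut out by the $\grL$-invariant congruence. Summing the resulting geometric series should yield the factor $\tfrac{99}{125} - \tfrac{19}{125}\cdot\tfrac{4}{5^5-1}$ before dividing by the normalizing measure $1-5^{-10}$.

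The main obstacle I expect is the $\grL$-invariant condition: translating $\ord_5(\grL(E_{A,B}))=1$ into a clean statement about $\ord_5$ of the $5$-adic logarithm of the Tate parameter, and then tracking exactly how often that logarithm has the prescribed valuation as $(A,B)$ ranges over each shell $\ord_5(\Delta)=k$. One must be careful that $\log_5(q_E)$ can have valuation strictly larger than $\ord_5(q_E)$ only in a measure-zero-corrected way, and that the "split vs.\ nonsplit" dichotomy correctly partitions the multiplicative shell into two pieces of the right relative measure (each of measure one half of the shell, up to boundary terms of measure $0$). Handling the alternating signs in the claimed formula — the $-\tfrac{19}{125}$ and the $-\tfrac{4}{5^5-1}$ — requires keeping precise count of which shells survive and with what density, so the bulk of the work is this $5$-adic shell-by-shell accounting; once it is set up, the geometric summation and the final division by $1-5^{-10}$ are routine.
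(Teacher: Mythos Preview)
Your plan is essentially the paper's own approach: reduce to the $5$-adic local density, split into the good-ordinary locus and the multiplicative locus, on the latter stratify by $k=\ord_5(\Delta)$, discard the strata with $5\mid k$, and on the split half translate the condition $\ord_5(\grL)=1$ into a congruence via the Tate parameter $q$ and the Iwasawa logarithm, then sum the resulting geometric series. The paper carries this out by writing $q=5^k\omega u$ with $\omega\in\mu_4$ and $u\in 1+5\Z_5$, so that $\log_5 q=\log_5 u$ and $\ord_5(\grL)=1\iff u\notin 1+5^2\Z_5$, and then reading this off from $\Delta(q)/5^k\pmod{5^2}$.

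Two small corrections to your outline. First, since $\log_5 p=0$ for the Iwasawa logarithm, one has $\log_5 q=\log_5 u$ and hence (when $5\nmid k$) the condition $\ord_5(\grL)=1$ is $\ord_5(\log_5 q)=1$, not $\ord_5(q)+1$; your stated equality $\ord_5(\log_5 q_E)=\ord_5 q_E+1$ would give the wrong count if taken literally. Second, the split/nonsplit dichotomy on the multiplicative locus is already determined modulo $5$ (indeed $A\equiv 3$ versus $A\equiv 2\pmod 5$), not modulo $5^2$; this does not affect your ``half the shell'' claim, which is correct. With these fixes the shell-by-shell accounting goes through exactly as in the paper and yields the stated value.
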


\begin{proof} 
Let $p\geq 5$ be a prime. As noted in the proof of Lemma \ref{S0-density}, the Weierstrass equation $E_{A,B}$ is minimal at $p$, 
and $E_{A,B}$ has multiplicative
reduction at $p$ if and only if $p\mid\Delta(A,B)$ but $p\nmid A$. We claim that if $E_{A,B}$ has multiplicative reduction at $p$, 
then it has split
multiplicative reduction if and only if $u^4\equiv -3A \pmod p$ has a solution.
To see this, we work over $\F_p$. 
Let $x_0\in \F_p$ be the double root
of $x^3+Ax+B$, so $3x_0^2 = -A$. The equations for the tangent lines at the nodal point $(x_0,0)$ on the singular curve $E_{A,B}/\F_p$ are
$y-\alpha(x-x_0)=0$ and $y-\beta(x-x_0)=0$ for some $\alpha,\beta\in \F_{p^2}$, and satisfy 
$$
y^2-x^3-Ax-B = (y-\alpha(x-x_0))\cdot (y-\beta(x-x_0)) - (x-x_0)^3.
$$
Equating the respective coefficients of $x$ and $x^2$ on each side, we find that $\alpha+\beta =0$ and $2\alpha\beta x_0 +3x_0^2=A$.
From this, together with the equality $3x_0^2 = -A$, we conclude that $-\alpha^2x_0 = A$. Squaring and again using that $3x_0^2=-A$, 
we see that $\alpha^4 = -3A$.  As $E_{A,B}$ has split multiplicative reduction if and only if $\alpha,\beta \in \F_p$, the claim  follows. 

Recall that the group $\mu_{p-1}$ of $(p-1)$-st roots of unity is a subgroup of $\Z_p^\times$. For a positive integer $k$ we 
define a subset $S_k\subset \Z_p^\times$ of size $p-1$ by
$$
S_k = \begin{cases} \{\omega - 24 p \omega^2 \ : \ \omega\in\mu_{p-1}\}  & k = 1 \\
\mu_{p-1} & k > 1.
\end{cases}
$$
Suppose $E_{A,B}$ has split multiplicative reduction at $p$. Let $k=\ord_p(\Delta(A,B))>0$. We claim that if $p\nmid k$, then 
$$
 \frac{\Delta(A,B)}{p^k}\!\!\!\!{\pmod{p^2}}\,\not\in\, S_k\!\!\!\!\pmod{p^2}
\iff  \grL(E_{A,B})\in p\Z_p^\times.
$$
To see this, recall that since $E_{A,B}$ has split multiplicative reduction at $p$, it has a Tate paramaterization and a corresponding 
Tate period $q\in p\Zp$,
and $\grL(E_{A,B}) = \frac{\log_pq}{\ord_p(q)}$. The discriminant $\Delta(E_{A,B})\in p\Zp$ is the value of a convergent
series
$$
\Delta(E_{A,B}) = \Delta(q) = q - 24q^2 + \cdots
$$
with $\Z$-coefficients. From this we see that $k = \ord_p(q)$ and that if $p\nmid k$, then $\grL(E_{A,B})\in p\Z_p^\times$
if and only if $\log_pq \in p\Z_p^\times$.  (Here $\log_p$ is the Iwasawa $p$-adic logarithm; in particular, $\log_p p = 0$ and
$\log_p\omega = 0$ for $\omega\in \mu_{p-1}$.)
We can uniquely write $q=p^k\omega u$ for some $\omega\in\mu_{p-1}$ and $u\in 1+p\Zp = (1+p)^{\Zp}$. Then $\log_pq\in p\Z_p^\times$
if and only if $\log_p u \in p\Z_p^\times$, which holds if and only if $u\not\in 1+p^2\Zp$. Since
$$
\frac{\Delta(q)}{p^k} = \omega u  - 24p^k\omega^2u^2 + p^{2k}(\cdots),
$$
we see that if $k>1$, then $u\equiv 1 \pmod{p^2}$ if only if $\frac{\Delta(q)}{p^k} \equiv \omega \pmod{p^2}$. Similarly,
if $k=1$, then $u\equiv 1 \pmod{p^2}$ if and only if $\frac{\Delta(q)}{p} \equiv \omega - 24p\omega^2 \pmod{p^2}$.

Suppose $p\nmid a_0$ and $k=\ord_p(\Delta(a_0,b_0))>0$ (so $p\nmid b_0$). 
Writing $A=a_0+p^ka_1$ and $B=b_0+p^kb_1$, and letting $\Delta_0 = \frac{\Delta(a_0,b_0)}{p^k}$, we find that 
$$
\frac{\Delta(A,B)}{p^k} \equiv \Delta_0 -16\begin{cases}
12 a_0^2 a_1 + 54b_0b_1 + p(12a_0a_1^2 + 27b_1^2) & k=1 \\
12 a_0^2a_1 + 54b_0b_1 &  k>1 \end{cases} \pmod{p^2}.
$$
From this it easily follows that, given $A$ modulo $p^{k+2}$ belonging to any one of the $\frac{1}{2}(p-1)p^{k+1}$ primitive residue classes
modulo $p^{k+2}$ such that $27B^2\equiv -4A^3\pmod p$ is solvable, there exist exactly $(2p-1)(p-1)$ residue classes for $B$ 
modulo $p^{k+2}$ (depending on $A$ modulo $p^{k+2}$) such that $\ord_p(\Delta(A,B))=k$ 
and $\frac{\Delta(A,B)}{p^k}\not\in S_k \pmod{p^2}$.
   In particular, the measure of the set of $(A,B)\in\Z_p^2$ such that 
$p\nmid A$, $\ord_p(\Delta(A,B))=k$ for a given $k>0$ with $p\nmid k$, $E_{A,B}$ has split multiplicative reduction at $p$, and 
$\frac{\Delta(A,B)}{p^k}\not\in S_k \pmod{p^2}$ is equal to $\frac{(p-1)^2(2p-1)}{4p^{k+3}}$ if $p\equiv 1\pmod 4$ and
to $\frac{(p-1)^2(2p-1)}{2p^{k+3}}$ if $p\equiv 3\pmod 4$.

Since $S_1'(5)$ is defined only by congruence conditions modulo powers of $5$, we have 
$$
\mu(S_1'(5)) = \mu_5(S_1'(5)).
$$
We define three sets $\Sigma_5^{\g}$, $\Sigma_5^{\ns}$, and $\Sigma_5^{\spl} \subset \Z_5^2$: 
\begin{itemize}
\item $\Sigma_5^\g$ is the set of those $(A,B)\in \Z_5^2$ with $5\nmid A$ and $5\nmid \Delta(A,B)$ (this is the set
of $(A,B)$ such that $E_{A,B}$ has good ordinary reduction at $5$);
\item $\Sigma_5^{\ns}$ is the set of those $(A,B)\in\Z_5^2$ with $A\equiv 2 \pmod 5$ and $B\equiv \pm 2\pmod 5$ (so $5\mid\Delta(A,B)$)
and $5\nmid \ord_5(\Delta(A,B))$ (this is the set of $(A,B)$ such that $E_{A,B}$ has non-split multiplicative reduction at $5$ and
$5\nmid \ord_5(\Delta(A,B))$);
\item $\Sigma_5^{\spl}$ is the set of those $(A,B)\in\Z_5^2$ with $A\equiv 3 \pmod 5$ and $B\equiv \pm 1 \pmod 5$ (so $5\mid\Delta(A,B)$),
$5\nmid k = \ord_5(\Delta(A,B))$, and $\frac{\Delta(A,B)}{5^k} \pmod {5^2} \not \in S_k \pmod {5^2}$ (this is the set of $(A,B)$ with 
$E_{A,B}$ having split multiplicative reduction at $5$, $5\nmid\Delta(A,B)$, and $\ord_5(\grL(E_{A,B}))=1$).
\end{itemize}
Then $\mu_5(S_1'(5)) = \mu_5(\Sigma_5^\g)+\mu_5(\Sigma_5^\ns)+\mu_5(\Sigma_5^{\spl})$.

Clearly, 
\begin{equation*}
\mu_5(\Sigma_5^\g) = \frac{16}{25}\cdot \left(1-\frac{1}{5^{10}}\right)^{-1}.
\end{equation*}
As was explained above, for any $A\equiv 2,3 \pmod 5$, there are $2(5-1)$ residue classes modulo $5^{k+2}$ (depending on $A$ modulo $5^{k+2}$) such that: $\ord_5\Delta(A,B)=k$ if and only if $B$ belongs to one of these residue classes. Hence
 $$
 \mu_5(\Sigma_5^{\ns}) = \left(1-\frac{1}{5^{10}}\right)^{-1}\cdot \sum_{\scriptstyle{k=1}\atop \scriptstyle{5\nmid k}}^\infty \frac{2(5-1)}{5^{k+2}} = \frac{2}{25}\left(1-\frac{4}{5^5-1}\right)
 \left(1-\frac{1}{5^{10}}\right)^{-1},
 $$
 and
 $$
 \mu_5(\Sigma_5^{\spl}) = \left(1-\frac{1}{5^{10}}\right)^{-1}\cdot \sum_{\scriptstyle{k=1}\atop \scriptstyle{5\nmid k}}^\infty \frac{(5-1)(2\cdot 5-1)}{5^{k+3}} = \frac{9}{125}
 \left(1-\frac{4}{5^5-1}\right)\left(1-\frac{1}{5^{10}}\right)^{-1}.
 $$
 Therefore,
 $$
 \mu_5(S_1'(5)) = \left( \frac{99}{125} - \frac{19}{125}\cdot\frac{4}{5^5-1}\right) \left(1-\frac{1}{5^{10}}\right)^{-1} = .7918054\ldots .
$$
\end{proof}

\begin{lemma}\label{S1-density}
We have $\mu(S_1(5)) > .7917957$.
\end{lemma}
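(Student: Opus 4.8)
The plan is to realize $S_1(5)$ as the family cut out of $S_1'(5)$ by one extra local condition at each prime $\ell\equiv\pm1\pmod 5$ --- namely that $5\nmid\ord_\ell(\Delta(A,B))$ whenever $\ord_\ell(\Delta(A,B))>0$ --- and to show that these conditions together discard less than $10^{-5}$ of $S_1'(5)$. First I would recall, exactly as in the proof of Lemma~\ref{S1prime-density}, that for a prime $\ell\geq 5$ the Weierstrass equation $E_{A,B}$ is minimal at $\ell$, so $\ell^2\nmid\Delta(A,B)$ forces $\ord_\ell(\Delta(A,B))\leq 1$. Hence at each $\ell\equiv\pm1\pmod 5$ the set $\Sigma_\ell$ defining $S_1(5)$ contains every $(A,B)\in\Z_\ell^2$ with $\ell^2\nmid\Delta(A,B)$ (and its boundary lies in the measure-zero locus $\{\Delta=0\}$), while at all other primes $\ell\neq 5$ no condition beyond minimality is imposed. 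Combined with the fact --- established in the proof of Lemma~\ref{S1prime-density} --- that $S_1'(5)$ is a large family defined by congruence conditions modulo powers of $5$, this shows that $S_1(5)$ is again a large family and that, by \cite[Thm.~3.17]{BS2},
$$\mu(S_1(5))\ =\ \mu(S_1'(5))\cdot\!\!\prod_{\ell\equiv\pm1\,(5)}\!\!\mu_\ell(S_1(5)),\qquad \mu_\ell(S_1(5))\ =\ 1-\frac{b_\ell}{1-\ell^{-10}},$$
where $b_\ell$ is the measure of $\{(A,B)\in\Z_\ell^2 : \ord_\ell(\Delta(A,B))\in\{5,10,15,\dots\}\}$.

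Next I would bound $b_\ell$ by separating $\Z_\ell^2$ according to the reduction type of $E_{A,B}$ at $\ell$. Curves of good reduction contribute nothing. For additive reduction ($\ell\mid A$ and $\ell\mid B$), a direct inspection of $\ord_\ell(-4A^3-27B^2)$ --- splitting on whether $3\,\ord_\ell(A)$ and $2\,\ord_\ell(B)$ agree, and using minimality --- shows that $\ord_\ell(\Delta(A,B))$ never equals $5$ and that $\ord_\ell(\Delta(A,B))\geq 10$ occurs only on a set of measure $O(\ell^{-9})$; this is simply the relevant part of Tate's algorithm over $\Z_\ell$ for $\ell\geq 5$. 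For multiplicative reduction ($\ell\mid\Delta(A,B)$, $\ell\nmid A$) I would run the same valuation analysis as in the proof of Lemma~\ref{S1prime-density}: for each of the $\tfrac12(\ell-1)$ residues $a_0$ of $A$ modulo $\ell$ with $\ell\nmid a_0$ for which $27B^2\equiv-4a_0^3\pmod\ell$ is solvable, the set $\{(A,B)\in\Z_\ell^2 : A\equiv a_0\ (\ell),\ \ord_\ell(\Delta(A,B))=k\}$ has measure $(\ell-1)^2/\ell^{k+2}$. Summing over $k\in\{5,10,15,\dots\}$ gives a multiplicative contribution of $\sum_{j\geq 1}(\ell-1)^2/\ell^{5j+2}=(\ell-1)^2/\!\left(\ell^2(\ell^5-1)\right)$, so altogether
$$b_\ell\ =\ \frac{(\ell-1)^2}{\ell^2(\ell^5-1)}+O(\ell^{-9})\ <\ \frac{1}{\ell^5-1}+O(\ell^{-9}).$$

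Finally I would assemble the estimate. Using $\prod_\ell(1-c_\ell)\geq 1-\sum_\ell c_\ell$ and bounding the product over $\ell\equiv\pm1\pmod 5$ from below by extending the sum to all primes $\ell\geq 11$,
$$\prod_{\ell\equiv\pm1\,(5)}\!\!\mu_\ell(S_1(5))\ \geq\ 1-\sum_{\ell\geq 11}\left(\frac{1}{\ell^5-1}+O(\ell^{-9})\right)\!(1-\ell^{-10})^{-1}\ >\ 1-1.04\times 10^{-5},$$
the sum being dominated by its terms at $\ell=11$ and $\ell=13$. Since $\mu(S_1'(5))=0.7918054\ldots$ by Lemma~\ref{S1prime-density}, this yields $\mu(S_1(5))>(0.7918054)(1-1.04\times 10^{-5})>0.7917957$, as required. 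Restricting the sum to primes $\ell\equiv\pm1\pmod 5$, and keeping the factor $(\ell-1)^2/\ell^2$ in $b_\ell$, enlarges the margin considerably; the crude bound already suffices.

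The one genuinely delicate point is the numerics of this last step: the gap between $\mu(S_1'(5))=0.7918054\ldots$ and the target $0.7917957$ is only about $10^{-5}$, so the bound $\sum_{\ell\geq 11}b_\ell<1.04\times 10^{-5}$ must be established honestly --- though the additive-reduction contributions, being $O(\ell^{-9})$, are harmless, and the tail of the sum is easily controlled by comparison with $\int x^{-5}\,dx$. The supporting computations (the additive-reduction case analysis and the multiplicative-reduction local density) are routine and essentially reprise ingredients already used in the proofs of Lemmas~\ref{S0-density} and~\ref{S1prime-density}.
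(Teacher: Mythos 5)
Your proposal is correct and follows essentially the same route as the paper: both realize $S_1(5)$ as the intersection of $S_1'(5)$ with the extra local conditions at primes $\ell\equiv\pm 1\pmod 5$, evaluate the multiplicative-reduction bad local measure as $(\ell-1)^2/(\ell^2(\ell^5-1))$ via the same valuation count used in Lemma~\ref{S1prime-density}, and dispose of the additive-reduction contribution by a crude upper bound. The only cosmetic differences are that the paper simply subtracts $1/\ell^5$ (covering both the additive cases and the non-minimal locus at once) rather than your sharper $O(\ell^{-9})$ estimate, and evaluates the resulting Euler product numerically rather than via $\prod_\ell(1-c_\ell)\geq 1-\sum_\ell c_\ell$.
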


\begin{proof}
The density of $S_1(p)$ equals the density of $S_1'(p)$ times the product over all primes $\ell\equiv \pm 1\pmod p$
of the local densities $\mu_\ell(\Sigma_\ell)$ where $\Sigma_\ell$ is the set of $(A,B)\in \Z_\ell^2$ such that 
$p\nmid \ord_\ell(\Delta(A,B))$ whenever $\ell\mid \Delta(A,B)$. 

We note that for a prime $\ell\geq 5$, the measure of the set of $(A,B)\in\Z_\ell^2$ that satisfy $\ell\nmid A$
and $\ord_\ell (\Delta(A,B))=k$ for a given integer $k>0$ is $\frac{(\ell-1)^2}{\ell^{k+2}}$.
For given an $A$ belonging to one of the $\frac{\ell-1}{2}$ primitive residue classes modulo $\ell$ for which 
$27B^2\equiv -4A^3\pmod \ell$ is solvable in $B$, we have 
$\ord_\ell(\Delta(A,B))=k$ if and only if $B$ belongs to one of $2(\ell-1)$ residue classed modulo $\ell^{k+1}$ (which may
depend on $A$ modulo $\ell^{k+1}$). It follows that for any prime $\ell\equiv \pm 1 \!\pmod p$, $\mu_\ell(\Sigma_\ell)$ is at least
$$
\left(1-\frac{1}{\ell^{10}}\right)^{-1} \left(1  - \sum_{n=1}^\infty \frac{(\ell-1)^2}{\ell^{5n+2}} - \frac{1}{\ell^5}\right)  
= \left(1-\frac{1}{\ell^{10}}\right)^{-1} \left(1  - \frac{(\ell-1)^2}{\ell^2(\ell^5-1)} - \frac{1}{\ell^5}\right).
$$
The first term being subtracted in the second factor on the left-hand side is the measure of those $(A,B)\in \Z_\ell^2$ such that $\ell\nmid A$, $\ord_\ell(\Delta(A,B))>0$, and $5\mid\ord_\ell(\Delta(A,B))$. The second term being subtracted is the measure of the set of $(A,B)$ such that $\ell^2\mid A$ and $\ell^3\mid B$, 
which contains the set of the $(A,B)$ with $\ell^4\mid A$ and $\ell^6\mid B$ {\it and} those $(A,B)$ such that $\ell \mid A$, $\ord_\ell(\Delta(A,B))>0$, and $5\mid \ord_\ell(\Delta(A,B))$.  

Therefore,
$$
\mu(S_1(5)) \geq \mu(S_1'(5))\cdot \prod_{\ell\equiv\pm 1\!\pmod 5} \left(1-\frac{1}{\ell^{10}}\right)^{-1} \left(1  - \frac{(\ell-1)^2}{\ell^2(\ell^5-1)} - \frac{1}{\ell^5}\right)   =  .7917957\ldots .
$$
\end{proof}

\noindent
In particular, we note that
\begin{equation}\label{mu-diff}
\mu(S_1'(5)) - \mu(S_1(5)) \leq .00001.
\end{equation}

Finally, we end with a lemma which explains why we only used Condition (a) of Theorem~\ref{crit0-thm1} to define $S_0(p)$, and only used Conditions (a), (c), and (e) of Theorem~\ref{crit1-thm1} to define $S_1(p)$: namely, $100\%$ of elliptic curves over $\Q$ satisfy Conditions~(b) and (c) of Theorem~\ref{crit0-thm1} and Conditions~(b) and~(d) of Theorem~\ref{crit1-thm1}.

\begin{lemma}\label{lemma0}
Let $p$ be any prime.  Then, when ordered by height, a density of $100\%$ of elliptic curves~$E$ over~$\Q$ possess the following two properties:
\begin{itemize}
\item
$E[p]$ is an irreducible $\Gal(\bar\Q/\Q)$-module;
\item
There exist at least two prime factors $\ell\mid\mid N(E)$, $\ell\neq p$,  such that $E[p]$ is ramified at $\ell$.
\end{itemize}
\end{lemma}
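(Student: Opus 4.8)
The plan is to establish the two listed properties separately, each on a set of density $100\%$; since the upper density is subadditive, their intersection then also has density $100\%$. For the first property, observe that $E[p]$ is reducible as a $\Gal(\bQ/\Q)$-module exactly when $E$ has a $\Q$-rational cyclic subgroup of order $p$, i.e.\ a rational $p$-isogeny --- a condition invariant under quadratic twisting, hence depending only on $j(E)$. Thus every $E_{A,B}$ with $E_{A,B}[p]$ reducible has $j$-invariant in the set $J_p$ of $j$-invariants of elliptic curves over $\Q$ admitting a rational $p$-isogeny, which is the image of $X_0(p)(\Q)$ in the $j$-line. If $X_0(p)$ has genus $\geq 1$ then $J_p$ is finite, and for each of the finitely many $j_0\in J_p$ the curves $E_{A,B}$ with $j(E_{A,B})=j_0$ and $H(E_{A,B})<X$ lie in finitely many twist families, so number $O(X^{1/2})=o(X^{5/6})$; if $X_0(p)\cong\P^1$ (so $p\in\{2,3,5,7,13\}$) then $J_p$ is a thin set and a standard count again gives $o(X^{5/6})$ such curves of height $<X$. (Equivalently, the first property is immediate from Duke's theorem that $100\%$ of elliptic curves over $\Q$, ordered by height, have irreducible --- in fact surjective --- mod-$p$ Galois representation.)

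For the second property, call a prime $\ell$ \emph{good} for $E_{A,B}$ if $\ell\geq 5$, $\ell\neq p$, and $\ord_\ell(\Delta(A,B))=1$. If $\ell$ is good for $E_{A,B}$ then $\ell\nmid A$ --- otherwise $\ell\mid 27B^2$, so $\ell\mid B$, forcing $\ell^2\mid\Delta(A,B)$ --- hence $E_{A,B}$ has multiplicative reduction at $\ell$ and $\ell\mid\mid N(E_{A,B})$; since $p\nmid 1=\ord_\ell(\Delta(A,B))$, Remark~\ref{rmk-ram} then shows $E_{A,B}[p]$ is ramified at $\ell$. So it suffices to prove that $100\%$ of the $E_{A,B}$ have at least two good primes. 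Fix a finite set $S$ of primes $\ell\geq 5$ with $\ell\neq p$. For $\ell\in S$, the condition that $\ell$ be good for $E_{A,B}$ is a congruence condition modulo $\ell^2$ cutting out a subset of $\Z_\ell^2$ of measure $(\ell-1)^2/\ell^3$ (by the proof of Lemma~\ref{S1-density}, using the observation above that goodness forces $\ell\nmid A$), and this subset is disjoint from the non-minimal locus $\{\ell^4\mid A,\ \ell^6\mid B\}$; writing $\delta_\ell$ for the resulting local density, we have $\delta_\ell=(1-\ell^{-1})^2(1-\ell^{-10})^{-1}/\ell>1/(2\ell)$, so $\sum_{\ell\geq 5,\,\ell\neq p}\delta_\ell=\infty$.

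The family $F_S$ of those $E_{A,B}$ having at most one good prime in $S$ is a finite disjoint union of large families (one for each choice of $\leq 1$ good prime in $S$), and summing the densities given by \cite[Thm.~3.17]{BS2},
\begin{equation*}
\mu(F_S)=\prod_{\ell\in S}(1-\delta_\ell)\Bigl(1+\sum_{\ell\in S}\tfrac{\delta_\ell}{1-\delta_\ell}\Bigr)\leq(1+2\sigma_S)\,e^{-\sigma_S},\qquad \sigma_S:=\sum_{\ell\in S}\delta_\ell,
\end{equation*}
using $\delta_\ell<1/2$ and $1-\delta_\ell\leq e^{-\delta_\ell}$. As $S$ exhausts $\{\ell\geq 5:\ell\neq p\}$ we have $\sigma_S\to\infty$, hence $\mu(F_S)\to 0$; and since the set of $E_{A,B}$ failing the second property is contained in $F_S$ for every such $S$, its upper density is $0$. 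Combining the two parts yields the lemma. I expect the main obstacle to be precisely this last limiting argument: it requires knowing each $\mu(F_S)$ exactly (hence the large-family formalism and the product formula for $\mu$), and then the elementary point that, although $\prod_{\ell\in S}(1-\delta_\ell)\to 0$ and $\sum_{\ell\in S}\delta_\ell(1-\delta_\ell)^{-1}\to\infty$ simultaneously, the product wins, as quantified by $(1+2\sigma)e^{-\sigma}\to 0$. (If one prefers not to cite Duke's theorem, the thin-set count in the cases $p\in\{2,3,5,7,13\}$ of the first property is the only other point needing care.)
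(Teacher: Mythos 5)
Your proof follows the paper's strategy (cite Duke's theorem for irreducibility; identify ``good'' primes $\ell$ with $\ord_\ell(\Delta)=1$, observe that these force $\ell\|N$ and $E[p]$ ramified at $\ell$, and show the density of curves with fewer than two good primes tends to zero), but your execution of the second step is actually tighter than what the paper prints, in a way worth flagging. The paper sets $S(L,\ell)=\{\ord_\ell(\Delta)\le 1\text{ and }\ord_q(\Delta)\neq 1\text{ for }5\le q\le L,\ q\notin\{\ell,p\}\}$ and writes $\mu(S(L))$ as $\sum_\ell\mu(S(L,\ell))$, but these sets are far from disjoint (a curve with no good primes in $[5,L]$ belongs to $S(L,\ell)$ for \emph{every} $\ell$ of good reduction), and the displayed sum does not tend to $0$: each summand is roughly $\prod_{q}(1-\delta_q)\asymp 1/\log L$, and there are $\asymp\pi(L)$ of them, so the sum grows like $L/(\log L)^2$. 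Your decomposition of $F_S$ as the genuinely disjoint union over the $\le 1$-element subsets of ``which prime is good'' repairs this: you get $\mu(F_S)=\prod_{\ell\in S}(1-\delta_\ell)\bigl(1+\sum_{\ell\in S}\delta_\ell/(1-\delta_\ell)\bigr)\asymp(\log\log |S|)/\log|S|\to 0$, and the inequality $\mu(F_S)\le(1+2\sigma_S)e^{-\sigma_S}$ makes the convergence transparent once $\sigma_S\to\infty$ (which your lower bound $\delta_\ell>1/(2\ell)$ gives via divergence of $\sum 1/\ell$). Two other points you handle explicitly that the paper leaves implicit: $\ord_\ell(\Delta)=1$ forces $\ell\nmid A$, hence multiplicative reduction and $\ell\|N$; and the local density set $\{\ord_\ell(\Delta)=1\}$ avoids the non-minimal locus $\{\ell^4\mid A,\ \ell^6\mid B\}$, so the normalization by $(1-\ell^{-10})^{-1}$ is clean. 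Your alternative $X_0(p)$/thin-set argument for the first property is fine but unnecessary given Duke; the paper likewise just cites Duke (and Hilbert irreducibility).
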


\begin{proof}
That $100\%$ of elliptic curves satisfy the first property follows easily, e.g., by Hilbert irreducibility.  (See also
the work of Duke~\cite{Duke}, who shows that in fact $100\%$ of all elliptic curves~$E$ over~$\Q$, when ordered by height,
  have the property that the action of $\Gal(\bar\Q/\Q)$ on $E[p]$ is
  irreducible for {all} primes $p$.)  
  
  To see that 100\% of elliptic curves over $\Q$ also satisfy the second property, we will show that the density of elliptic curves 
  not satisfying this property is zero.  For a large integer $L$ and for each prime $5\leq \ell\leq L$, $\ell\neq p$, let $S(L,\ell)$ 
  be the set of elliptic curves $E_{A,B}$ such that $\ord_\ell(\Delta(A,B))\leq 1$ and for all primes $5\leq q\leq L$ with $q\notin\{\ell,p\}$, we have 
  $\ord_q(\Delta(A,B))\neq 1$. Let $S(L) = \cup_{5\leq \ell\leq L, \ell\neq p} S(L,\ell)$. 
  Any curve in the complement of $S(L)$ satisfies the second property of the lemma with two primes $5\leq \ell_1,\ell_2\leq L$.
  So it suffices to show that $\mu(S(L))$ tends to $0$ as $L$ gets large.
  
  The set $S(L)$ is a disjoint union of the large sets $S(L,\ell)$, and the density $\mu(S(L,\ell))$ of $S(L,\ell)$ is a product of local densities. Calculating these local densities as in the proof of 
  Lemma~\ref{S1-density}, we find
  $$
  \mu(S(L)) = \sum_{5\leq \ell\leq L,\ell\neq p} (1-\ell^{-10})^{-1}(1-\frac{1}{\ell^2}-\frac{(\ell-1)}{\ell^3})
  \prod_{5\leq q\leq L, q\neq \ell,p} (1-q^{-10})^{-1}(1-\frac{(q-1)^2}{q^3}),
  $$
  which tends to $0$ as $L$ tends to $\infty$.   
\end{proof}

\subsection{A lower bound on the proportion of curves having algebraic and analytic rank~$0$}

We first treat the case of rank $0$.  
 
\begin{theorem}\label{rank0}
  Suppose $F$ is a finite union of large families of elliptic curves such that
  exactly $50\%$ of the curves in~$F$, when ordered by height, have
  root number $+1$.  Then 
  at least $3/8$ of the curves in $F$, when
  ordered by height, have $5$-Selmer rank $0$.  
\end{theorem}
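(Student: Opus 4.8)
The plan is to combine three inputs — the Bhargava–Shankar computation that the average size of the $5$-Selmer group is $6$, the Dokchitser–Dokchitser parity correspondence, and the balanced root-number hypothesis — and then run an elementary optimization that squeezes out the constant $3/8$. First I would fix notation: set $N(X)=\#\{E\in F:H(E)<X\}$ and, for each integer $n\ge 0$, $N_n(X)=\#\{E\in F:\dim_{\F_5}S_5(E)=n,\ H(E)<X\}$, so that $\sum_n 5^n N_n(X)$ is the total size of all $5$-Selmer groups of curves in $F$ of height $<X$. Since $F$ is a \emph{finite} union of large families, Theorem~\ref{thBS} applies to each of them and, by inclusion–exclusion, to each of the finitely many pairwise, triple, $\dots$ intersections (all of which are again large); as every large family contains $\asymp X^{5/6}$ curves of height $<X$, this yields
\[
\sum_n 5^n N_n(X)=(6+o(1))\,N(X)\qquad(X\to\infty).
\]

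Next I would bring in parity. By Lemma~\ref{lemma0}, a density $100\%$ of the curves in $F$ have $E[5]$ irreducible, hence have no nonzero rational $5$-torsion point, so $\dim_{\F_5}E(\Q)[5]=0$ outside a set of density $0$; Theorem~\ref{thDD} then forces $\dim_{\F_5}S_5(E)$ to be even exactly when the root number of $E$ is $+1$, again outside a negligible set. Combining this with the hypothesis that exactly $50\%$ of the curves in $F$ have root number $+1$ gives
\[
\sum_{n\ \mathrm{even}}N_n(X)=\left(\tfrac12+o(1)\right)N(X),\qquad
\sum_{n\ \mathrm{odd}}N_n(X)=\left(\tfrac12+o(1)\right)N(X).
\]

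Then I would optimize. Since $5^n\ge 5$ for every odd $n\ge1$, the odd part of the weighted sum is $\ge\frac52N(X)+o(N(X))$, so the even part is $\le\big(6-\frac52+o(1)\big)N(X)=\frac72N(X)+o(N(X))$. Since $5^n\ge25$ for every even $n\ge2$, the even part also satisfies
\[
\sum_{n\ \mathrm{even}}5^n N_n(X)\ \ge\ N_0(X)+25\Big(\sum_{n\ \mathrm{even}}N_n(X)-N_0(X)\Big)=-24\,N_0(X)+\tfrac{25}{2}N(X)+o(N(X)).
\]
Comparing the two estimates gives $-24\,N_0(X)\le-9\,N(X)+o(N(X))$, i.e. $\liminf_{X\to\infty}N_0(X)/N(X)\ge 3/8$, which is exactly the assertion that at least $3/8$ of the curves in $F$ have $5$-Selmer rank $0$.

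The points needing care — none of them serious — are: the extension of Theorem~\ref{thBS} from a single large family to a finite union (handled by the inclusion–exclusion above); the implication ``$E[5]$ irreducible $\Rightarrow$ no rational $5$-torsion $\Rightarrow\dim_{\F_5}E(\Q)[5]=0$'', which lets us drop the $t_5$ term from the Dokchitser–Dokchitser formula; and keeping the $o(1)$ errors honest through the optimization, which is why the conclusion is phrased as a $\liminf$. I expect the real content to be the last step: the worst case concentrates half the curves at $5$-Selmer rank $1$ and splits the remaining half between ranks $0$ and $2$, and one checks $\tfrac38\cdot1+\tfrac12\cdot5+\tfrac18\cdot25=6$, so $3/8$ is precisely what the three constraints impose and cannot be improved without extra input (such as the finer equidistribution results alluded to in the introduction).
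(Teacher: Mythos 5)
Your proof is correct and is essentially the paper's own argument: both use the Bhargava--Shankar average of $6$, the $50/50$ parity split from Dokchitser--Dokchitser (with the $t_5$ term dropped via Lemma~\ref{lemma0}), and the same two-step optimization (odd rank $\Rightarrow$ order $\ge 5$, even rank $\ge 2$ $\Rightarrow$ order $\ge 25$) to force a lower density of $3/8$ at rank $0$. The only difference is that you explicitly justify extending Theorem~\ref{thBS} from a single large family to a finite union via inclusion--exclusion on the (again large) intersections, a technical point the paper leaves implicit.
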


\begin{proof}
Let $p=
5$.  Then by Theorem~\ref{thBS}, the average size of the $p$-Selmer group of curves in~$F$ is $p+1$. On the other hand, by Theorem~\ref{thDD}, we know that that $1/2$ of the curves in $F$ have odd $p$-Selmer rank and thus have at least $p$ elements in the $p$-Selmer group. Hence the $\limsup$ of the 
average size of the $p$-Selmer group among the half 
of elliptic curves in $F$ having even $p$-Selmer rank is at most
$p+2$.  Now if the $p$-Selmer group of an elliptic curve has
even rank, then it must have order 1, $p^2$, or more than $p^2$.  Since the
$\limsup$ of the average of such orders is at most $p+2$, a lower density of at least $(p^2-p-2)/(p^2-1)=(p-2)/(p-1)$ of these orders must be equal to $1$.  Thus
among these $1/2$ of curves in $F$ with even $p$-Selmer
rank, a lower density of at least $(p-2)/(p-1)$ have trivial $p$-Selmer group; i.e., a lower density of at least $1/2\cdot (p-2)/(p-1) = (p-2)/(2p-2)$ of curves in $F$ have $p$-Selmer rank~0.   
For $p=5$, this yields a lower proportion of $3/8$ of curves in~$F$ having $5$-Selmer rank 0.
\end{proof}

\begin{corollary}\label{rank0cor}
When ordered by height, at least $16.50\%$ of elliptic curves over $\Q$ have both algebraic and analytic rank~$0$.
\end{corollary}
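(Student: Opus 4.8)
The plan is to combine Theorem~\ref{rank0} with the equidistribution result Theorem~\ref{equithm}, and then to verify that the $5$-Selmer rank $0$ curves produced this way genuinely satisfy all the hypotheses of Theorem~\ref{crit0-thm1}, so that they have algebraic and analytic rank $0$. First I would take $F = S_0(5)$, which by Lemma~\ref{S0-density} is a large family (defined by congruence conditions modulo a power of $5$) of density $\mu(S_0(5)) > .8$. Since $5 \equiv 1 \pmod 4$, Theorem~\ref{equithm} applies: there is a finite union $F'$ of large subfamilies of $S_0(5)$ on which the root numbers are equidistributed and with $\mu(F') > .5501\cdot\mu(S_0(5))$. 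Exactly $50\%$ of the curves in $F'$ then have root number $+1$, so Theorem~\ref{rank0} gives that a lower density of at least $3/8$ of the curves in $F'$ have $5$-Selmer rank $0$.

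Next I would translate this into a statement about the proportion among \emph{all} elliptic curves. A lower density of at least $\tfrac{3}{8}\mu(F') > \tfrac{3}{8}\cdot .5501 \cdot \tfrac{4\cdot 5^{10}}{5(5^{10}-1)}$ of all elliptic curves over $\Q$ both lie in $S_0(5)$ and have $5$-Selmer rank $0$; a quick numerical check shows this is greater than $.1650$. It remains to see that such curves have algebraic and analytic rank $0$. Curves in $S_0(5)$ satisfy hypothesis (a) of Theorem~\ref{crit0-thm1} by definition, and $5$-Selmer rank $0$ is hypothesis (d). Hypotheses (b) and (c) hold for $100\%$ of all elliptic curves by Lemma~\ref{lemma0} (the second property of that lemma is stronger than (c), which only requires one ramified prime $\ell \mid\mid N$). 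Removing this density-zero set does not affect the lower density bound, so a lower density of at least $.1650$ of all elliptic curves satisfy (a)--(d) of Theorem~\ref{crit0-thm1}, hence have both algebraic and analytic rank $0$.

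I expect the only subtle point is bookkeeping with liminf/limsup densities: Theorem~\ref{rank0} produces a \emph{lower} density, Lemma~\ref{lemma0} removes a set of density $0$ (harmless for lower densities), and Theorem~\ref{equithm} gives the exact proportion of root number $+1$ inside $F'$, which is what Theorem~\ref{rank0} needs as a hypothesis. One should be slightly careful that the ``exactly $50\%$ with root number $+1$'' hypothesis of Theorem~\ref{rank0} is met on the nose by $F'$ (it is, by the construction in Theorem~\ref{equithm}, where $E$ and its $-1$-twist both lie in $F'$ with opposite root numbers). The arithmetic at the end is routine: $\tfrac{3}{8}\cdot .5501 \cdot .8 > .165$, and the exact constant $\tfrac{4\cdot 5^{10}}{5(5^{10}-1)}$ is only microscopically larger than $.8$, so the bound $16.50\%$ follows comfortably. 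The main conceptual input is really Theorem~\ref{rank0}, which has already been established; this corollary is its quantitative consequence once the family $S_0(5)$ and its equidistributed subfamily are plugged in.
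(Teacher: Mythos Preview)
Your proposal is correct and follows essentially the same approach as the paper: apply Theorem~\ref{equithm} to $S_0(5)$ to obtain a subfamily $F'$ with equidistributed root numbers and density at least $.5501\cdot\mu(S_0(5))$, apply Theorem~\ref{rank0} to get that at least $3/8$ of~$F'$ has $5$-Selmer rank~$0$, and then invoke Theorem~\ref{crit0-thm1} together with Lemma~\ref{lemma0} to conclude. The paper's computation is exactly your $\tfrac{3}{8}\times .5501 \times .8 = .16503$.
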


\begin{proof}
By definition, a proportion of $\mu(S_0(5))$ of all elliptic curves over $\Q$ satisfy condition (a) of Theorem~\ref{crit0-thm1}.  By Theorem~\ref{equithm}, inside the family $S_0(5)$ there exists a finite union $F$ of large subfamilies of density $\mu(F)=\kappa\cdot \mu(S_0(5))$,
with $\kappa\geq .5501$, and having equidistributed root number.  By Theorem~\ref{rank0}, a proportion of 3/8 of elements of $F$ have $5$-Selmer rank 0.  Thus, by Theorem~\ref{crit0-thm1} and Lemma~\ref{lemma0}, a proportion of at least $3/8\cdot \kappa\cdot \mu(S_0(5))$ of all elliptic curves over $\Q$ have both algebraic and analytic rank 0.

By Theorem~\ref{equithm} and Lemma~\ref{S0-density}, we thus obtain a lower density of greater than 
$$
\frac{3}{8}\times .5501 \times .8  = .16503
$$ 
of all curves have both algebraic and analytic rank 0, as stated in the corollary.
\end{proof}

\noindent

In other words, at least $16.50\%$ of all elliptic curves have rank 0 and satisfy the Birch and Swinnerton-Dyer conjecture.  

\subsection{A lower bound on the proportion of curves having algebraic and analytic rank~1}

We are now ready to treat the case of rank 1.
 
\begin{theorem}\label{rank1}
  Suppose $F$ is a finite union of large families of elliptic curves such that
  exactly $50\%$ of the curves in~$F$, when ordered by height, have
  root number $+1$.  Then 
  at least $19/40$ of the curves in $F$, when
  ordered by height, have $5$-Selmer rank $1$.  
\end{theorem}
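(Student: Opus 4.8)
The plan is to run the argument of Theorem~\ref{rank0} almost verbatim, but this time extracting the information from the \emph{odd}-$p$-Selmer-rank half of $F$ rather than the even half, with $p = 5$. First I would invoke Theorem~\ref{thBS} to record that the average size of the $5$-Selmer group over the curves in $F$, ordered by height, is $p + 1 = 6$, and I would use Theorem~\ref{thDD} together with the hypothesis on root numbers to record that exactly half the curves in $F$ have even $p$-Selmer rank and exactly half have odd $p$-Selmer rank. The key new point is a matching \emph{upper} bound on the average $p$-Selmer size over the odd half: since every curve of even $p$-Selmer rank has $p$-Selmer group of order at least $1$, that half contributes at least $\frac{1}{2}$ to the average $p$-Selmer size over all of $F$, so the $\limsup$ of the average size of the $p$-Selmer group among the half of $F$ with odd $p$-Selmer rank is at most $2(p+1) - 1 = 2p + 1$.

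Next I would use that a curve of odd $p$-Selmer rank has $p$-Selmer group of order $p$, $p^3$, or more than $p^3$; in particular any such curve that is not of $p$-Selmer rank $1$ contributes at least $p^3$ to the average. Combining this with the bound $2p+1$ on the $\limsup$ of the average of these orders forces a lower density of at least $\eta := (p^3 - 2p - 1)/(p^3 - p) = (p^2 - p - 1)/(p(p-1))$ of the odd-$p$-Selmer-rank curves to have $p$-Selmer rank exactly $1$ --- otherwise the average order over the odd half would exceed $2p+1$, since $\eta p + (1-\eta)p^3 = 2p+1$ and $x \mapsto xp + (1-x)p^3$ is decreasing. Multiplying by the density $1/2$ of curves in $F$ of odd $p$-Selmer rank gives a lower density of at least $\frac{1}{2}\eta = (p^2 - p - 1)/(2p(p-1))$ of the curves in $F$ with $5$-Selmer rank $1$; for $p = 5$ this evaluates to $19/40$, as claimed.

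I do not expect any genuine obstacle: all the substantive input is imported from Theorems~\ref{thBS} and~\ref{thDD}, and the remaining step is an elementary optimization --- which is moreover best possible given only these two inputs, since it is consistent with them that every even-$p$-Selmer-rank curve has trivial $p$-Selmer group, forcing the bound $2p+1$, and hence $19/40$, to be sharp. The only point requiring a little care, exactly as in the proof of Theorem~\ref{rank0}, is the bookkeeping with lower densities and $\limsup$'s: the proportion of curves in $F$ with a prescribed $p$-Selmer rank is not a priori an honest limit, so I would pass to a subsequence of heights $X \to \infty$ along which the proportion of $p$-Selmer rank $1$ among the odd-$p$-Selmer-rank curves converges and then take limits in the inequality bounding the average $p$-Selmer size over the odd half.
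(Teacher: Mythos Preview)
Your proposal is correct and matches the paper's proof essentially line for line: both invoke Theorem~\ref{thBS} for the average $p+1$, use Theorem~\ref{thDD} to split $F$ into equal even/odd $p$-Selmer-rank halves, bound the $\limsup$ of the average over the odd half by $2p+1$ via the trivial lower bound of~$1$ on the even half, and then solve the same linear optimization $(p^3-2p-1)/(p^3-p)=(p^2-p-1)/(p^2-p)$ to get $19/40$ at $p=5$. Your added remarks on sharpness and on handling the $\liminf$/$\limsup$ bookkeeping via subsequences are sound refinements of what the paper leaves implicit.
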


\begin{proof}
Let $p=5$.  Again, by Theorem~\ref{thBS}, the average size of the $p$-Selmer group of curves in~$F$ is $p+1$; by Theorem~\ref{thDD}, we know that that $1/2$ of the curves in $F$ have even $p$-Selmer rank.
Since every $p$-Selmer group has at least one element (namely, the identity element), the limsup of the 
average order of the $p$-Selmer groups among the half 
of elliptic curves in $F$ that have odd $p$-Selmer rank is at most
$2p+1$.  Now if the $p$-Selmer group of an elliptic curve has
odd rank, then it must have order $p$, $p^3$, or more than $p^3$.  Since the limsup of the 
average of such orders is at most $2p+1$, a lower density of at least $(p^3-2p-1)/(p^3-p)=(p^2-p-1)/(p^2-p)$ of these orders must be equal to $p$.
Thus
among these $1/2$ of curves in $F$ with odd $p$-Selmer
rank, at least $(p^2-p-1)/(p^2-p)$ have $p$-Selmer rank 1;  i.e., a lower density of at least $1/2\cdot (p^2-p-1)/(p^2-p) = (p^2-p-1)/(2p^2-2p)$ of curves in $F$ have $p$-Selmer rank~1.   
For $p=5$, this yields a lower proportion of $19/40$ of curves in~$F$ having $5$-Selmer rank 1.
\end{proof}

\begin{corollary}\label{rank1cor}
When ordered by height, at least $20.68\%$ of elliptic curves over $\Q$ have both algebraic and analytic rank $1$.
\end{corollary}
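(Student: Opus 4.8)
The plan is to run the same argument as in the proof of Corollary~\ref{rank0cor}, but with $S_0(5)$ replaced by $S_1(5)$ and Theorem~\ref{crit0-thm1} replaced by Theorem~\ref{crit1-thm1}. By the very definition of $S_1(5)$ (together with Remarks~\ref{rmk-ram} and~\ref{rmk-finite}), every curve $E_{A,B}\in S_1(5)$ satisfies hypotheses (a), (c), and (e) of Theorem~\ref{crit1-thm1}; by Lemma~\ref{lemma0}, a density $100\%$ of all elliptic curves over $\Q$ satisfy hypotheses (b) and (d). Thus the only hypothesis left to control is (f), namely that the $5$-Selmer group have order exactly $5$, i.e.\ $5$-Selmer rank $1$. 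By Theorem~\ref{rank1}, any finite union of large families on which exactly $50\%$ of the curves have root number $+1$ has a lower density of $19/40$ of curves of $5$-Selmer rank $1$. So it suffices to produce a large subfamily of $S_1(5)$ of sufficiently large density on which the root numbers are equidistributed, and then combine the $19/40$ with Theorem~\ref{crit1-thm1} and Lemma~\ref{lemma0}.

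The subtlety is that Theorem~\ref{equithm} applies only to families defined by congruence conditions modulo powers of primes $p\equiv 1\pmod 4$, while $S_1(5)$ is cut out by additional congruence conditions at the infinitely many primes $\ell\equiv\pm 1\pmod 5$. I would therefore apply Theorem~\ref{equithm} not to $S_1(5)$ but to $S_1'(5)$, which (as shown in the proof of Lemma~\ref{S1prime-density}) is defined purely by congruence conditions modulo powers of $5$: this yields a finite union $F'$ of large subfamilies of $S_1'(5)$ with $\mu(F')\geq \kappa\cdot\mu(S_1'(5))$ for some $\kappa\geq .5501$, on which the root numbers are equidistributed, and for which $E\in F'\iff E_{-1}\in F'$, with $E$ and $E_{-1}$ having opposite root numbers. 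I then set $F:=F'\cap S_1(5)$. This is a finite union of large families, since an intersection of large families is large. Moreover, the defining conditions of $S_1(5)$ depend on $(A,B)$ only through $5\nmid A$, through the reduction type and $\grL$-invariant at $5$, and through $\Delta(A,B)$ — and all of these are invariant under the height-preserving involution $(A,B)\mapsto(A,-B)$, since $\Delta(A,-B)=\Delta(A,B)$ and the split/non-split distinction depends only on $A$ modulo $5$. Hence $S_1(5)$ is stable under the $-1$-twist, so $F=F'\cap S_1(5)$ inherits the twist-pairing of $F'$: up to the density-zero set of curves with $B=0$, the curves in $F$ come in twist-pairs of opposite root number, so exactly $50\%$ of the curves in $F$ have root number $+1$.

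It then remains to bound $\mu(F)$ from below. Since $F'\subseteq S_1'(5)$, we have $F'\setminus S_1(5)\subseteq S_1'(5)\setminus S_1(5)$, so by~(\ref{mu-diff}),
$$\mu(F)=\mu(F')-\mu(F'\setminus S_1(5))\;\geq\;\kappa\cdot\mu(S_1'(5))-.00001.$$
Applying Theorem~\ref{rank1} to $F$, and then Theorem~\ref{crit1-thm1} together with Lemma~\ref{lemma0}, a lower density of at least $\tfrac{19}{40}\bigl(\kappa\cdot\mu(S_1'(5))-.00001\bigr)$ of all elliptic curves over $\Q$ have both algebraic and analytic rank $1$. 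Plugging in $\kappa\geq .5501$ and $\mu(S_1'(5))=.7918054\ldots$ from Lemma~\ref{S1prime-density} gives a lower density exceeding
$$\frac{19}{40}\bigl(.5501\times .7918054-.00001\bigr)>.2068,$$
as claimed. The main obstacle here is the bookkeeping of the previous paragraph: one must verify that $F'\cap S_1(5)$ is again a (finite union of) large famil(y/ies), that it still has exactly equidistributed root numbers (which is why twist-stability of $S_1(5)$ is needed), and that the passage from $S_1'(5)$ to $S_1(5)$ costs essentially nothing — precisely the content of the estimate~(\ref{mu-diff}) coming from Lemma~\ref{S1-density}.
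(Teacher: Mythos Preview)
Your proof is correct and follows essentially the same approach as the paper's: apply Theorem~\ref{equithm} to $S_1'(5)$, use Theorem~\ref{rank1} to get the $19/40$ proportion of $5$-Selmer rank~$1$, and then invoke Theorem~\ref{crit1-thm1} together with Lemma~\ref{lemma0} and the estimate~\eqref{mu-diff}. The only difference is the order of two steps: you intersect with $S_1(5)$ \emph{before} applying Theorem~\ref{rank1} (and therefore must check, via twist-stability of $S_1(5)$, that root numbers remain equidistributed on $F'\cap S_1(5)$ --- an argument the paper itself uses later in Corollary~\ref{rank0or1cor}), whereas the paper applies Theorem~\ref{rank1} directly to $F\subset S_1'(5)$ and only afterwards subtracts the $.00001$ for curves outside $S_1(5)$. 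This yields the cosmetically different but numerically equivalent bounds $\tfrac{19}{40}(\kappa\,\mu(S_1'(5))-.00001)$ versus $\tfrac{19}{40}\kappa\,\mu(S_1'(5))-.00001$, both exceeding $.2068$.
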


\begin{proof}
By definition, a proportion of $\mu(S_1'(5))$ of all elliptic curves over $\Q$ satisfy Conditions (a) and (e) of Theorem~\ref{crit1-thm1}.  
By Theorem~\ref{equithm}, there exists a finite union $F$ of large subfamilies in $S_1'(5)$ with density 
$\kappa\cdot \mu(S_1'(5))$, with $\kappa\geq.5501$, and having equidistributed root numbers.  
By Theorem~\ref{rank1}, a proportion of at least 19/40 of the elliptic curves $E$ in $F$ have the property that $\#S_5(E)=5$.
We conclude by Theorem~\ref{crit1-thm1}, Lemma~\ref{lemma0}, and \eqref{mu-diff} that a proportion of at least 
$$
\frac{19}{40} \times .5501 \times  .7918054 - .00001 > .20688
$$ 
of all elliptic curves lie in $S_1(5)$ and have both algebraic and analytic rank 1.
\end{proof}

\noindent

In other words, at least $20.68\%$ of all elliptic curves have rank 1 and satisfy the Birch and Swinnerton-Dyer conjecture.

\subsection{A lower bound on the proportion of curves having algebraic and analytic rank~0 or 1}

Corollaries \ref{rank0cor} and \ref{rank1cor} together already show that a proportion of at least $16.50+20.68=37.18\%$ of all elliptic curves satisfy the Birch and Swinnerton-Dyer rank conjecture.  However, we can do much better if we do not consider the individual ranks 0 and 1 separately.  Specifically, we can prove that a large proportion of curves have algebraic and analytic rank 0 or 1 even in large families where the root number is not equidistributed:
 
\begin{theorem}\label{rank0or1}
Let $F\subset S_0(5)\cap S_1(5)$ be any finite union of large families of elliptic curves.  Then at least 
$19/24$ of the elliptic curves in $F$ have algebraic and analytic rank $0$ or $1$.  
If, furthermore, root numbers are equidistributed in $F$,
then at least 
$7/8$
of the elliptic curves in $F$ have algebraic and analytic rank $0$ or $1$.
\end{theorem}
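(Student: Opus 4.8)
The plan is to convert the statement into one about $5$-Selmer ranks and then to run the averaging argument of Theorems~\ref{rank0} and~\ref{rank1}, but --- crucially for the $19/24$ bound --- without first passing to a root-number-balanced subfamily. Since $F\subset S_0(5)\cap S_1(5)$, by construction every $E\in F$ satisfies hypothesis~(a) of Theorem~\ref{crit0-thm1} and hypotheses~(a),~(c),~(e) of Theorem~\ref{crit1-thm1}; and by Lemma~\ref{lemma0} a density $100\%$ of $E\in F$ additionally satisfy hypotheses~(b),~(c) of Theorem~\ref{crit0-thm1} and hypotheses~(b),~(d) of Theorem~\ref{crit1-thm1} (here one uses that $F$, being a finite union of large families, makes up a positive proportion of all elliptic curves, so the density-zero exceptional set of Lemma~\ref{lemma0} remains density zero relative to $F$). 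Hence, for $100\%$ of $E\in F$: if the $5$-Selmer group $S_5(E)$ is trivial then $E$ has algebraic and analytic rank $0$, by Theorem~\ref{crit0-thm1}; and if $\#S_5(E)=5$ then $E$ has algebraic and analytic rank $1$, by Theorem~\ref{crit1-thm1}. It therefore suffices to show that at least $19/24$ of the curves in $F$, and at least $7/8$ of them when root numbers are equidistributed, have $5$-Selmer rank $0$ or $1$.

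For the unconditional bound, put $p=5$ and let $a$, $b$, $c$ be the proportions of curves in $F$, when ordered by height, of $5$-Selmer rank $0$, $1$, and $\geq 2$, so that $a+b+c=1$. Curves of $5$-Selmer rank $0$, $1$, $\geq 2$ have $5$-Selmer group of order $1$, $p$, and at least $p^2$ respectively, while by Theorem~\ref{thBS} the average order of the $5$-Selmer group over $F$ is $p+1$; since $a+pb+p^2c$ is a lower bound for that average, $p+1\geq a+pb+p^2c$. Substituting $a=1-b-c$ gives $(p-1)b+(p^2-1)c\leq p$, hence $c\leq p/(p^2-1)=5/24$ (as $b\geq 0$), and so $a+b=1-c\geq 19/24$. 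Together with the first paragraph this proves the first assertion. (As in Theorems~\ref{rank0} and~\ref{rank1}, the displayed inequality should be read with the obvious $\liminf$/$\limsup$, using that the average in Theorem~\ref{thBS} is a genuine limit.)

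For the improved bound under equidistribution, I would bring in the parity. Since $E[5]$ is irreducible --- hence $E(\Q)[5]=0$ --- for $100\%$ of $E\in F$ (Lemma~\ref{lemma0}), Theorem~\ref{thDD} shows that for $100\%$ of $E\in F$ the $5$-Selmer rank of $E$ is even precisely when the root number of $E$ is $+1$; so if root numbers are equidistributed in $F$, then half of $F$ has even $5$-Selmer rank and half odd. Let $x$ (resp.\ $y$) denote the proportion of curves in $F$ with even $5$-Selmer rank $\geq 2$ (resp.\ odd $5$-Selmer rank $\geq 3$); then the proportions of curves of $5$-Selmer rank $0$ and $1$ are $\tfrac12-x$ and $\tfrac12-y$, and the curves counted by $x$ (resp.\ $y$) have $5$-Selmer group of order at least $p^2$ (resp.\ at least $p^3$). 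Hence, using Theorem~\ref{thBS} that the average $5$-Selmer order over $F$ is $p+1$,
\begin{equation*}
p+1\;\geq\;\Bigl(\tfrac12-x\Bigr)\cdot 1+\Bigl(\tfrac12-y\Bigr)\cdot p+x\cdot p^2+y\cdot p^3\;=\;\tfrac{p+1}{2}+(p^2-1)\,x+(p^3-p)\,y,
\end{equation*}
so for $p=5$ we get $24x+120y\leq 3$, whence $24(x+y)\leq 24x+120y\leq 3$ and $x+y\leq\tfrac18$. Thus at least $1-(x+y)\geq 7/8$ of the curves in $F$ have $5$-Selmer rank $0$ or $1$, and the first paragraph finishes the proof.

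No step here is deep: given the criteria of Theorems~\ref{crit0-thm1} and~\ref{crit1-thm1} together with the Selmer-average and parity results, Theorems~\ref{thBS} and~\ref{thDD}, each of the two bounds reduces to a one-line linear program. The only things requiring attention are the routine $\liminf$/$\limsup$ bookkeeping around the exact average-size statement of Theorem~\ref{thBS}, and the verification that the density-zero sets arising from Lemma~\ref{lemma0} stay density zero inside the positive-density family $F$; there is no substantial obstacle. It is worth emphasizing where the two numbers come from: the weaker bound $19/24$ uses no information about root numbers at all, which is precisely why it can be applied to the entire large family $S_1(5)$ with no loss rather than only to an equidistributed subfamily; whereas the improvement to $7/8$ exploits the fact that, under an equidistributed parity split, the ``bad'' curves of odd $5$-Selmer rank must have rank $\geq 3$, hence $5$-Selmer group of order $\geq 125$, far exceeding the average order $6$.
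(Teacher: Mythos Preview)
Your proof is correct and follows essentially the same approach as the paper: both parts reduce to the same linear programs on the $5$-Selmer average from Theorem~\ref{thBS}, with the parity split via Theorem~\ref{thDD} entering only in the second part. The only cosmetic difference is the choice of variables (you work with the ``bad'' proportions $c$ and $x,y$, while the paper works with the complementary ``good'' proportions $x_{0\,\mathrm{or}\,1}$ and $x_0,x_1$), and you spell out the reduction via Theorems~\ref{crit0-thm1}, \ref{crit1-thm1}, and Lemma~\ref{lemma0} more explicitly than the paper does.
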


\begin{proof}
Let $p=5$. By Theorem~\ref{thBS}, the average size of the $p$-Selmer group of curves in $F$ is $p+1$.  
Let $x_{0{\rm \,or\,}1}$ be the lower density of elliptic curves in $F$ having
$5$-Selmer rank $0$ or $1$. Then
$$
x_{0{\rm \,or\,}1}+p^2(1-x_{0{\rm \,or\,}1})\leq p+1,
$$ and hence $x_{0{\rm \,or\,}1}\geq (p^2-p-1)/(p^2-1)$. The bound is achieved when a proportion
of $(p^2-p-1)/(p^2-1)$ of elliptic curves in $F$ have $p$-Selmer rank $0$, and a
proportion of $p/(p^2-1)$ have $p$-Selmer rank $2$.
Setting $p=5$, and then applying Theorem~\ref{crit1-thm1}, now proves the first part of  Theorem~\ref{rank0or1}.

To prove the second part of Theorem~\ref{rank0or1}, let $x_{0{\rm \,or\,}1}$ again denote the lower density of elliptic curves with $5$-Selmer rank 0 or 1.  
Also, let $x_0$ (resp.\ $x_1$) denote the lower density of elliptic curves
with $5$-Selmer rank $0$ (resp.\ $1$).  
Then, by Theorem~\ref{thBS} and~\ref{thDD}, we have
$$ x_0+p^2(1/2-x_0)+p(x_1+p^2(1/2-x_1))\leq p+1.$$ Thus, we obtain
$$(p^2-1)x_0+(p^3-p)x_1\geq (p^3+p^2)/2-p-1.$$ In conjunction with the constraint $x_1\leq 1/2$, it
follows that $$x_{0{\rm \,or\,}1}\geq x_0+x_1\geq [(p^2+p)/2-p-1]/(p^2-1)+1/2=(2p-3)/(2p-2).$$ 
Again, this bound is achieved when a proportion of $(p-2)/(2p-2)$ of elliptic curves over $\Q$ have $p$-Selmer rank $0$, a proportion of 
$1/2$ of elliptic curves have $5$-Selmer rank $1$, and a proportion of $1/(2p-2)$ of
elliptic curves have $5$-Selmer rank $2$.  Setting $p=5$, and applying Theorem~\ref{crit1-thm1}, 
now yields the second part of Theorem~\ref{rank0or1}.
\end{proof}

\begin{corollary}\label{rank0or1cor}
When ordered by height, at least $66.48\%$ of elliptic curves over $\Q$ have algebraic and analytic rank $0$ or $1$.
\end{corollary}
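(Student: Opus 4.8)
The plan is to reduce the corollary to a purely combinatorial statement about $5$-Selmer ranks inside the single large family $S_1(5)$. Since $S_0(5)\supset S_1'(5)\supset S_1(5)$ we have $S_0(5)\cap S_1(5)=S_1(5)$, and by Lemma~\ref{S1-density} this family has density $\mu(S_1(5))>.7917957$. Combining Theorems~\ref{crit0-thm1} and~\ref{crit1-thm1} with Lemma~\ref{lemma0} and Remarks~\ref{rmk-ram} and~\ref{rmk-finite}, one checks that every curve $E\in S_1(5)$ outside a set of density zero has the property that $5$-Selmer rank $0$ (resp.\ $1$) forces algebraic and analytic rank $0$ (resp.\ $1$): conditions (b),(c) of Theorem~\ref{crit0-thm1} and (b),(d) of Theorem~\ref{crit1-thm1} hold for $100\%$ of curves, while (a) holds since $S_1(5)\subset S_0(5)$, and (c),(e) of Theorem~\ref{crit1-thm1} hold by construction of $S_1(5)$ and $S_1'(5)$ together with Remarks~\ref{rmk-ram},~\ref{rmk-finite}. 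Hence it suffices to bound from below the proportion of $S_1(5)$ having $5$-Selmer rank $0$ or $1$ and then multiply by $\mu(S_1(5))$, absorbing into the error the negligible discrepancy recorded in~\eqref{mu-diff} between $S_1(5)$ and $S_1'(5)$.

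Next I would carve out a large equidistributed piece of $S_1(5)$. As explained in \S3.2, $S_1'(5)$ is a large family defined by congruence conditions modulo powers of $5$, so Theorem~\ref{equithm} produces a finite union $F'$ of large subfamilies of $S_1'(5)$ with equidistributed root numbers and $\mu(F')>.5501\cdot\mu(S_1'(5))$. Set $G:=F'\cap S_1(5)$. Then $G$ is again a finite union of large families, contained in $S_1(5)$; since $F'$ is cut out by conditions only at the prime $5$ while $S_1(5)$ is obtained from $S_1'(5)$ by imposing conditions only at primes $\ell\equiv\pm1\pmod5$, the relevant local-density factors multiply cleanly and $\mu(G)/\mu(S_1(5))=\mu(F')/\mu(S_1'(5))>.5501$. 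Moreover the height-preserving involution $E_{A,B}\mapsto E_{A,-B}$ (the $-1$-twist) preserves $S_1(5)$ — it fixes $\Delta(A,B)$, and, as $5\equiv1\pmod4$, it does not change the reduction behaviour at $5$ nor $\ord_5(\grL)$ nor any $\ord_\ell(\Delta)$ — and it preserves $F'$ while flipping root numbers; hence it preserves $G$ and flips root numbers there, so $G$ has exactly half of its members of root number $+1$ (at each height bound, up to the $O(X^{1/3})$ curves with $B=0$).

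Now write $S_1(5)=G\sqcup H$ and invoke Theorem~\ref{rank0or1}. Its equidistributed case, applied to $G$, shows that at least $7/8$ of the curves in $G$ have $5$-Selmer rank $0$ or $1$. For $H$: by Theorem~\ref{thBS} the average order of the $5$-Selmer group over $S_1(5)$ and over $G$ are both $p+1=6$, so by linearity the average over $H$ is $6$ as well, and the root-number-free argument in the first case of Theorem~\ref{rank0or1} then gives that at least $19/24$ of $H$ has $5$-Selmer rank $0$ or $1$. Averaging these two bounds with weights $\mu(G)/\mu(S_1(5))\ge.5501$ and $\mu(H)/\mu(S_1(5))\le.4499$ — the weighted average is minimized at the endpoint since $7/8>19/24$ — and then multiplying by $\mu(S_1(5))>.7917957$ and subtracting the slack of~\eqref{mu-diff}, one obtains the asserted lower bound of at least $66.48\%$ for the proportion of all elliptic curves with algebraic and analytic rank $0$ or $1$.

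The main obstacle is not number-theoretic — all the arithmetic input is quoted from Theorems~\ref{crit0-thm1},~\ref{crit1-thm1},~\ref{thBS},~\ref{thDD},~\ref{equithm} and the combinatorial optimizations are exactly those already performed in Theorem~\ref{rank0or1} — but lies in the bookkeeping of the second paragraph: one must verify that intersecting the equidistributed family $F'$ with $S_1(5)$ neither destroys the ``finite union of large families'' structure nor the exact $50$--$50$ split of root numbers, and that it costs in density precisely the factor relating $\mu(S_1'(5))$ to $\mu(S_1(5))$. The $-1$-twist stability of $S_1(5)$, which rides on $5\equiv1\pmod4$, is what makes this work.
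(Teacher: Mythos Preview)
Your overall architecture matches the paper's: apply Theorem~\ref{equithm} to $S_1'(5)$, intersect with $S_1(5)$ using the $-1$-twist stability to keep root numbers equidistributed, use the $7/8$ bound on the equidistributed piece and the $19/24$ bound on its complement. However, there are two problems.

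\textbf{The numerics do not reach $66.48\%$.} Your final weighted average is
\[
\Bigl(\tfrac{7}{8}\cdot .5501 + \tfrac{19}{24}\cdot .4499\Bigr)\cdot \mu(S_1(5))
\;\leq\; .83751\times .79181 \;\approx\; .6631,
\]
and subtracting any ``slack'' only lowers this further. You obtain at best about $66.31\%$, not $66.48\%$. The paper closes this gap by exploiting a region you have discarded: the curves in $S_0(5)\setminus S_1'(5)$, of density $\mu(S_0(5))-\mu(S_1'(5))\approx .00820$. These curves may fail condition~(e) of Theorem~\ref{crit1-thm1}, so the rank-one criterion is unavailable, but Theorem~\ref{crit0-thm1} still applies. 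Running the rank-zero argument (as in Corollary~\ref{rank0cor}) on this residual piece contributes an additional $\tfrac{3}{8}\times .5501\times .00820\approx .00169$, which is exactly what lifts the total to $.6648$. Without this extra step your proof does not establish the stated bound.

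\textbf{A smaller gap in the bookkeeping.} You assert that ``$F'$ is cut out by conditions only at the prime $5$,'' and use this to conclude $\mu(G)/\mu(S_1(5))=\mu(F')/\mu(S_1'(5))$. Theorem~\ref{equithm} as stated gives no such structural information about~$F'$; it only guarantees that $F'$ is a finite union of large subfamilies closed under $-1$-twist with root-number flip. The paper avoids your multiplicativity claim entirely, bounding $\mu(F'\cap S_1(5))$ instead via the crude inequality $\mu(F'\cap S_1(5))\geq \mu(F')-(\mu(S_1'(5))-\mu(S_1(5)))$ using~\eqref{mu-diff}. (Your argument for the complement $H$ via linearity of the Selmer average is fine and in fact slightly slicker than the paper's $\epsilon$-approximation by finite unions, but this does not rescue the numerical shortfall.)
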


\begin{proof}
By Theorem~\ref{equithm} there is a finite union $F'$ of large subfamilies in $S_0(5)\cap S_1'(5)$ of 
density $\kappa\mu(S_0(5)\cap S_1'(5))$ with $\kappa\geq.5501$ and such that for all $E\in F'$ the root number of $E$ and its $-1$-twist
have opposite signs. Let $F = F'\cap S_1(5)$. Then $F$ is also a finite union of large subfamilies and, since $S_1(5)$ is 
stable under $-1$-twist, $F$ also has the property that for all $E\in F$ the root number of~$E$ and its $-1$-twist
are both in $F$ and have opposite signs. In particular, the root numbers of the curves in $F$ are equidistributed. 
By \eqref{mu-diff}, the density of $F$ satisfies
$$
\mu(F) \geq \kappa\mu(S_0(5)\cap S_1'(5)) - .00001.
$$
By  the second part of Theorem~\ref{rank0or1}, a proportion of at least $7/8$ of the curves in $F$
have algebraic and analaytic rank 0 or 1.

Next we consider the set $F''$ of curves in $S_0(5)\cap S_1(5)$ on which the above arguments have not been applied.
This set contains the complement of $F$ in $S_0(5)\cap S_1(5)$, which is a (possibly infinite) disjoint union of large subfamilies.  
So for any $\epsilon > 0$, $F''$ contains
a finite union $F''_\epsilon$ of large subfamilies such that 
$$
\mu(F''_\epsilon) \geq \mu(S_0(5)\cap S_1(5)) - \mu(F) - \epsilon\geq (1-\kappa) \cdot \mu(S_0(5)\cap S_1'(5)) - .00001 - \epsilon.
$$
By Theorem~\ref{rank0or1}, a proportion of at least $19/24$ of the curves in $F''_\epsilon$ have algebraic
and analytic rank 0 or 1.

For the set of elliptic curves in $S_0(5)$ on which the above arguments have not been applied, which has density at least $.8-.79179\ldots
=.00820\ldots$,
we can apply the arguments of Corollary \ref{rank0cor}. This gives an additional set of curves of density at least 
$3/8\times .5501\times .00820 = .00169\ldots$ that have algebraic and analytic rank $0$.

It follows that a total proportion of at least
$$
\left(\frac{7}{8}\kappa + \frac{19}{24}(1-\kappa)\right)\times \mu(S_0(5)\cap S_1(5)') - \left(\frac{7}{8}+\frac{19}{24}\right)\times .00001 + 
.00169
$$
of elliptic curves have algebraic and analytic rank $0$ or $1$. Since $\kappa\geq .5501$, we conclude by Lemma~\ref{S1prime-density} that
this proportion is at least
$$
\left(\frac{7}{8}\times .5501 + \frac{19}{24}\times .4499\right)\times .7918054\ldots - \left(\frac{7}{8}+\frac{19}{24}\right)\times .00001 + 
.00169 = .664816\ldots.
$$
\end{proof}

\noindent

Thus at least $66.48\%$ of all elliptic curves over $\Q$ satisfy the Birch and Swinnerton-Dyer conjecture, yielding Theorem~\ref{bsdcor}.

\section{Conclusion and future work}

The proportions in Corollaries \ref{rank0cor}, \ref{rank1cor}, and \ref{rank0or1cor} can be improved by strengthening Theorems \ref{crit0-thm1},
\ref{crit1-thm1}, \ref{thBS}, or \ref{equithm}. In particular, if Theorem \ref{crit0-thm1} is improved as indicated in Remark~\ref{rmk0}, then 
the lower bound on the proportion of elliptic curves having algebraic and analytic rank $0$ increases to $19.8\%$, and the lower bound on the proportion of elliptic curves having algebraic and analytic rank $0$ {\it or} algebraic and analytic rank $1$ increases to $69.6\%$. If Theorem \ref{crit1-thm1} can
be improved as indicated in Remark~\ref{rmk1}, then the lower bound on the proportion of elliptic curves having algebraic and analytic rank $1$
increases to $24.8\%$, and the lower bound on the proportion of elliptic curves 
satisfying the Birch and Swinnerton-Dyer rank conjecture increases to $79.7\%$ (working also with the prime $3$ would push this lower bound over $80\%$).

Finally, there have been a number of recent heuristics (cf.\ Delaunay~\cite{Delaunay}, Poonen--Rains~\cite{PR}, and~\cite{BS5}) that independently suggest that, for all $p$, the average size of the $p$-Selmer group of all elliptic curves, when ordered by height, should be $p+1$.  Indeed, Theorem~\ref{thBS} gives excellent evidence for this conjecture, confirming the conjecture for the primes $p=2$, 3, and 5.

Tracing through the methods of the previous sections, with a general value of $p$ in place of $p=5$, immediately allows us to deduce: 

\begin{theorem}\label{allpavg}
Consider the family of all elliptic curves over $\Q$ ordered by height. 
Suppose that for 
all primes $p$, 
the average size of the $p$-Selmer group of elliptic curves over $\Q$ is $p+1$.  
Then the Birch and Swinnerton-Dyer rank conjecture is true for $100\%$ of elliptic curves over~$\Q$. 
\end{theorem}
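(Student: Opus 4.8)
The plan is to re-run the argument of Section~3 with an arbitrary large prime $p$ in place of $p=5$, and then let $p\to\infty$. Fix a prime $p\ge5$ and take $F=S_1(p)$, a large family. By construction $F$ is contained in the set of elliptic curves satisfying conditions (a), (c), (e) of Theorem~\ref{crit1-thm1} and condition (a) of Theorem~\ref{crit0-thm1}; and by Lemma~\ref{lemma0} a density of $100\%$ of the curves in $F$ additionally satisfy condition (b) of both theorems, condition (c) of Theorem~\ref{crit0-thm1}, and condition (d) of Theorem~\ref{crit1-thm1}. Hence for $100\%$ of the curves $E\in F$ we have: if $S_p(E)$ is trivial then $\rk(E)=\rk_\an(E)=0$ (by Theorem~\ref{crit0-thm1}), and if $\#S_p(E)=p$ then $\rk(E)=\rk_\an(E)=1$ (by Theorem~\ref{crit1-thm1}).

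I would then feed in the hypothesis---read, as in Theorem~\ref{thBS}, as the statement that the average size of the $p$-Selmer group is $p+1$ over \emph{every} large family, this being the form in which that theorem is established for $p\le5$---applied to $F=S_1(p)$. Running the combinatorial argument from the first part of the proof of Theorem~\ref{rank0or1} with this value of $p$ (a curve of $p$-Selmer rank $\ge2$ has at least $p^2$ elements in its $p$-Selmer group, so $x+p^2(1-x)\le p+1$ for the lower density $x$ of curves in $F$ of $p$-Selmer rank $0$ or $1$) yields that a lower density of at least $(p^2-p-1)/(p^2-1)$ of the curves in $F$ have $p$-Selmer rank $0$ or $1$. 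Since, by the previous paragraph, the remaining hypotheses of Theorems~\ref{crit0-thm1} and~\ref{crit1-thm1} hold for $100\%$ of $F$, the same lower density $(p^2-p-1)/(p^2-1)$ of curves in $F$ have equal algebraic and analytic rank, both at most $1$, and hence satisfy the Birch and Swinnerton-Dyer rank conjecture. Multiplying by $\mu(F)=\mu(S_1(p))$, a lower density of at least $\tfrac{p^2-p-1}{p^2-1}\cdot\mu(S_1(p))$ of all elliptic curves over $\Q$ satisfy the rank conjecture.

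It remains to show $\mu(S_1(p))\to1$ as $p\to\infty$; granting this and letting $p\to\infty$ in the last bound forces a density of $100\%$ of all elliptic curves to satisfy the rank conjecture, which is the theorem. Since $S_1(p)\subset S_0(p)$, one has $\mu(S_1(p))\ge\mu(S_0(p))-\mu(M_p)-\delta_p$, where $M_p$ is the set of $E_{A,B}$ with multiplicative reduction at $p$ and $\delta_p$ is the density of $E_{A,B}$ with $p\mid\ord_\ell(\Delta(A,B))$ for some prime $\ell\equiv\pm1\!\pmod p$ dividing $\Delta(A,B)$. One computes $\mu(M_p)=O(1/p)$ directly, and $\delta_p\le c\sum_{\ell\equiv\pm1\,(p)}\ell^{-p}\le c\sum_{m\ge p-1}m^{-2}\to0$ for an absolute constant $c$, by the local density estimates of Lemmas~\ref{S0-density}--\ref{S1-density}. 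Finally $\mu(S_0(p))\to1$ because the only reduction types excluded from $S_0(p)$---additive and good supersingular reduction at $p$---together cut out a vanishing proportion of $\Z_p^2$ as $p\to\infty$: additive reduction forces $p\mid A$ and $p\mid B$ (a density $p^{-2}$ condition, since $E_{A,B}$ is minimal at $p\ge5$), and there are only $O(\sqrt p\,\log p)$ supersingular $j$-invariants lying in $\F_p$, each accounting for $O(p)$ of the $p^2$ residue classes $(A,B)\bmod p$.

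The one step requiring genuine care is this last estimate $\mu(S_0(p))\to1$, and within it the count of supersingular $j$-invariants defined over $\F_p$: one cannot simply invoke ``the supersingular locus is a proper subvariety of $\mathbb{A}^2$'', since its degree grows with $p$, and one needs instead the classical fact (going back to Deuring) that this count is $O(p^{1/2+\epsilon})$ rather than $O(p)$. Everything else in the argument is either a direct citation of the results assembled in Section~2 or precisely the combinatorial optimization already carried out for $p=5$ in Section~3.
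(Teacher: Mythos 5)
Your proposal is correct and follows essentially the same route as the paper's proof: take $p$ large, apply the combinatorial argument of the first part of Theorem~\ref{rank0or1} (the part not requiring equidistribution of root numbers) to the large family $F=S_1(p)$ to obtain the lower density $(p^2-p-1)/(p^2-1)$ of curves with $p$-Selmer rank $0$ or $1$, invoke Theorems~\ref{crit0-thm1} and~\ref{crit1-thm1} together with Lemma~\ref{lemma0}, and then show $\mu(S_1(p))\to1$. You also correctly observe that the hypothesis must be read as applying to every large family (this is the form in which Theorem~\ref{thBS} is proved), since the argument is run inside $S_1(p)$ rather than the full family.

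The one place you go further than the paper is precisely where it is worth going further. The paper asserts without comment that $(1-1/p)^2\prod_{\ell\equiv\pm1\!\pmod p}(1-1/\ell^{10})^{-1}(1-1/\ell^5)^2$ is a lower bound on $\mu(S_1(p))$ and lets $p\to\infty$. Your instinct to scrutinize the supersingular contribution is well placed: the proportion of $(A,B)\bmod p$ giving supersingular reduction is of order $p^{-1/2+o(1)}$ (the number of supersingular $j$-invariants in $\F_p$ being of class-number size), which is \emph{not} $O(1/p)$, so the factor $(1-1/p)^2$ is not in fact a valid lower bound on the local density at $p$ once $p$ is moderately large. What matters, and what your argument supplies, is that this density still tends to $1$. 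One small correction to your estimate of $\delta_p$: the dominant term in the local deficiency at a prime $\ell\equiv\pm1\pmod p$ comes from the condition $\ell^2\mid A,\ \ell^3\mid B$ used in the proof of Lemma~\ref{S1-density}, which contributes $\ell^{-5}$ independently of $p$, so the bound should read $\delta_p\le c\sum_{\ell\equiv\pm1\pmod p}\ell^{-5}$ rather than $\sum\ell^{-p}$; since every such $\ell$ is at least $2p-1$, this still tends to $0$ as $p\to\infty$, and your conclusion stands.
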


\begin{proof} Let $p$ be a prime. A lower bound on the density $\mu(S_0(p)\cap S_1(p))$ is 
\begin{equation}\label{pdensity}
(1-1/p)^2\cdot\prod_{\ell\equiv\pm 1\!\!\!\!\pmod p} (1-1/\ell^{10})^{-1}(1-1/\ell^5)^2.
\end{equation}
Of these, by the proof of Corollary \ref{rank0or1cor}, a proportion of at least 
\begin{equation}\label{0or1prop}
\frac{p^2-p-1}{p^2-1}
\end{equation} 
have algebraic and analytic rank $0$ or $1$. As $p$ tends to infinity, the product of \eqref{pdensity} and \eqref{0or1prop} tends to~$1$.
\end{proof}

Indeed, in this paper, we have used the $p$-Selmer average for the prime $p=5$ to prove unconditionally that at least $66.48\%$ of elliptic curves over $\Q$ satisfy the Birch and Swinnerton-Dyer rank conjecture.

\subsection*{Acknowledgments}

We are very grateful to Arul Shankar and Xin Wan for helpful conversations. 
The first named author was
supported in part by National Science Foundation Grant~DMS-1001828 and
a Simons Investigator Grant.  The second named author was supported in
part by National Science Foundation Grants~DMS-0701231 and
DMS-0758379.  The third named author was supported in
part by a National Science Foundation Grant~DMS-1301848 and a Sloan Research Fellowship.

\end{document}